\theoremstyle{definition}
\newtheorem{theo}{Theorem}[subsection]
\newtheorem{pr}[theo]{Proposition}
 \newtheorem{lem}[theo]{Lemma}
\theoremstyle{remark}
\newtheorem{rema}[theo]{Remark}
\theoremstyle{definition}
\newtheorem{defi}[theo]{Definition}
\numberwithin{equation}{subsection}
\newcommand\goz{\mathfrak o_0}
\newcommand\go{\mathfrak o}
\newcommand\gok{\mathfrak O}
\newcommand\ga{\mathfrak A}
\newcommand\gaz{\ga^0}
\newcommand\gc{\mathfrak C}
\newcommand\gd{\mathfrak D}
\newcommand\gm{\mathfrak M}
\newcommand\gmo{\gm_{\go}}
\newcommand\cpi{c_{\pi}}
\newcommand\rcpi{r(c_{\pi})\ob}
\newcommand\rx{r^{\kkk}}
\newcommand\kp{K'/k'}
\newcommand\rxp{r^{\kkp}}
\newcommand\kkp{K'\otimes_{k'}K'}
\newcommand\pip{\pi'}
\newcommand\gmp{\gm'}
\newcommand\rr{R_{\kk}}
\newcommand\rrp{R_{\kp}}
\newcommand\ovkp{\overline{k'}}
\newcommand\ovk{\overline{k}}
\newcommand\ovh{\overline{h}}
\newcommand\gop{{\mathfrak O}_{K'}}
\newcommand\gopo{\go'}
 \newcommand\wh{{\tilde{h_2}}}
\newcommand\si{\sigma}
\newcommand\osi{\overline{\si}}
\newcommand\al{\alpha}
\newcommand\eps{\varepsilon}
\newcommand\be{\beta}
\newcommand\de{\delta}
\newcommand\tr{\operatorname{tr}}
\newcommand\cha{\operatorname{char}}
\newcommand\ot{\otimes}
\newcommand\gk{{\mathfrak O}_K}
\newcommand\ob{^{-1}}
\newcommand\kk{{K/k}}
\newcommand\kkk{{K\ot_k K}}
\newcommand\kkkp{{K'\ot_k K'}}
\newcommand\gp{\rho} 
\newcommand\kg{{K[G]}}
\newcommand\kog{{k[G]}}
\newcommand\kzg{{k_0[G]}}
\newcommand\kz{{k_0}}
\newcommand\koz{k'_0}
\newcommand\vp{v_p}
\newcommand\q{\mathbb{Q}}
\newcommand\z{\mathbb{Z}}
\newcommand\mi{\underset F -}
\newcommand\pik{\pi_k}
\newcommand\piz{\pi_0}
\newcommand\zpz{\mathbb{Z}/p\z}
\newcommand\gal{\operatorname{Gal}}
 \newcommand\lan{\langle}
\newcommand\ra{\rangle}
\newcommand\ns{\{0\}}
\begin{document}


\title{
Bases of associated Galois modules in general wildly ramified extensions and in elementary abelian extensions of degree $p^2$}
\author{Mikhail V.\,Bondarko, Kirill S.\,Ladny, Konstantin I. Pimenov
\thanks{The  research of Mikhail V.\,Bondarko was supported by the Leader (Leading scientist Math) grant no. 22-7-1-13-1 of  the Theoretical Physics and Mathematics Advancement Foundation «BASIS». The research of Kirill S. Ladny was supported within the framework of the project “International academic cooperation” HSE University. } 
  }\maketitle
 \vspace*{15mm}
\begin{flushright}
	\textit{ 
 The paper is dedicated to  Sergei Vladimirovich Vostokov,\\ who was an outstanding mathematician and also \\ a wonderful scientific advisor of the first author\\
  that inspired him to study additive Galois modules.\\ Your memories will live on long after you've passed.}\end{flushright} 


\begin{abstract} 
For a wildly ramified extension $\kk$ of complete discrete valuation fields we study collections of elements of $k[G]$ (where $G=\gal(\kk)$) that fit well for constructing bases of various associated Galois modules and orders. In the case $G=(\zpz)^2$ (where $p$ is the characteristic of residue fields) we are able to compute the action of the elements $(\si_1-1)^i(\si_2-1)^j,\ 0\le i,j\le p-1,$ on the valuation filtration; here $\si_1,\si_2$ are generators of $G$. If the ramification jumps of $\kk$ are distinct modulo $p^2$ then these elements do yield "good enough"  bases in question. 
\end{abstract}


\tableofcontents

\section{Introduction}

\subsection{Some history and motivation}

This paper is devoted to the calculation of certain associated Galois modules. Those are closely related to associated Galois orders (and we compute some of those as well). Let us recall some history and motivation for studying these matters.

Starting from  \cite{L}, the ring of integers $\gok$ of a global or a local field $K$ is
studied as a module over its associated order $$\ga(\gok)=\{f\in k[G]:\ f(\gk)\subset \gk \};$$ here $G$ is the Galois group of an 
 extension $\kk$.
The main results of ibid. says that $\gok$ is free over $\ga(\gok)$ (and hence isomorphic to $\ga(\gok)$ as a Galois module) whenever $k=\q$ and $G$ is abelian.

Since 1959, dozens of papers computing associated orders (of rings of integers and fractional ideals) were written. Yet the main subject of this paper are so-called associated Galois modules that are somewhat distinct from associated orders, even though closely related to them. 
First we define them, and then discuss the relation to associated orders.

Throughout this paper $\kk$ is a totally ramified extension of complete discrete valuation fields of degree $n$. 

We set  $$\gc_i=\{f\in K[G]:\
\min_{x\in K^*}(v(f(x))-v(x))\ge i\};\ \ga_i=\gc_i\cap \kog,$$
where $v$ is the discrete valuation on $K$ and  $i\in \z$. The main question we studied is to find a specific description of all $\ga_i$.
Our main result is Theorem \ref{tmain} below that gives a simple expression for $\ga_i$ for all $i\in \z$ assuming that $\kk$ is an elementary abelian extension of degree $n=p^2$ whose ramification jumps are distinct modulo $n$.

\begin{rema}\label{rord}
1. Obviously $\ga_0\subset \ga(\gok)\subset \ga_{1-n}$ and $\ga_{1-n}\subset \ga_{-n}=\pik\ob \ga_0$, where $\pik$ is an uniformizing element of $k$.

Hence the computation of $\ga_i$ does give some information on the associated order.

2. In certain cases we have  Galois module isomorphisms $\ga_i\cong \gm^{i-d}$ (where $d$ is the ramification depth on $\kk$; see \S\ref{sbd} below); see Theorem 4.3.2(5) of \cite{bgm3}. However, 
 we probably don't have any isomorphisms of this sort in the setting of our Theorem \ref{tmain}; cf. Theorem 3.10 of \cite{by}.

  So, to add some motivation, we will now recall an interesting statement that relates associated Galois modules to the arithmetic of the extension.

  We will never apply this theorem in the paper; respectively, the reader who does not need additional motivation for studying associated Galois modules may skip it.

\end{rema}

Let $F$ be a commutative $m$-dimensional formal group law with coefficients in $\go$. We will write  $F(\gm)$ for the corresponding value of the commutative group functor coming from $F$ (here $\gm$ is the maximal ideal of the valuation field $K$); note that there exists a canonical (forgetful) bijection of sets $F(\gm)\to \gm^m$.

\begin{theo}[Theorem 2.3.1  of \cite{bgm4}]

Take $w>0$; let a map 
 $$ A:G\to F(\gm),\
\si\to a_\si= (a_{1\si},\dots ,a_{m\si})$$ (here $a_{i\si}\in \gm$) belong to $Z^1(G, F(\gm))$ (that is, an inhomogeneous Galois $1$-cocycle).  Then the following
statements are equivalent.

1. For any $i,\ 1\le i\le m$, the element $f_i=\sum_{\si\in
G}a_{i\si}\si$ lies in the module $\gc_{w+d}$; here $d$ is the  ramification depth of $\kk$ (see \S\ref{sbd} below). 

2. $f\in B^1(G,F(\gm))$ (a $1$-coboundary) and there exists an $x=(x_i)\in
F(\gm)$ such that $$
x{\mi} \si(x)
  =a_\si\ \forall\si\in G $$
and $v(x_i)\ge w$ for all $i$.  
\end{theo}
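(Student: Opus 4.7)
I would handle both implications by comparing the formal-group setting with the additive case, where the statement reduces to a Hilbert~90--style coboundary computation. The central technical ingredient is a \emph{trace generator} $z\in K$ with $\tr(z)=1$ and $v(z)\ge -d$; its existence is built into the definition of the ramification depth $d$. The same $z$ will let one pass between $1$-cocycles on the one hand and operators in $\gc_*$ on the other.

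For $(2)\Rightarrow(1)$, suppose $a_\sigma=x\mi\sigma(x)$ with $v(x_i)\ge w$. I would expand the formal subtraction componentwise as $a_{i\sigma}=(x_i-\sigma(x_i))+Q_{i\sigma}(x,\sigma(x))$, where $Q_i$ is a power series over $\go$ all of whose monomials have total degree $\ge 2$. The linear contribution to $f_i(y)=\sum_\sigma a_{i\sigma}\sigma(y)$ telescopes to $x_i\tr(y)-\tr(x_iy)$, and the trace bound $v(\tr(\xi))\ge v(\xi)+d$ coming from the definition of $d$ gives $v(\text{linear part})\ge v(y)+w+d$. The remainder $\sum_\sigma Q_{i\sigma}\sigma(y)$ decomposes, via $\sigma(x^b)\sigma(y)=\sigma(x^b y)$, into a finite sum of terms of the form $c\,x^a\tr(x^b y)$ with $c\in\go$ and $|a|+|b|\ge 2$; the same trace bound makes each such term have valuation $\ge 2w+v(y)+d\ge v(y)+w+d$, since $w>0$.

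For $(1)\Rightarrow(2)$, I would build $x$ by successive approximation. Start from the additive Ansatz $x_i^{(0)}=f_i(z)$: the cocycle identity restricted to its additive principal part, together with $\tr(z)=1$, makes $x_i^{(0)}-\sigma(x_i^{(0)})$ equal to the linear part of $a_{i\sigma}$, while the hypothesis $f_i\in\gc_{w+d}$ combined with $v(z)\ge -d$ yields $v(x_i^{(0)})\ge w$. Then iterate: form the residual cocycle $a_\sigma\mi(x^{(n)}\mi\sigma(x^{(n)}))$, which should lie at strictly deeper $\gc_*$-level, apply the additive recipe to obtain an increment $\delta^{(n)}$ of strictly greater valuation, and update $x^{(n+1)}=x^{(n)}\pl\delta^{(n)}$. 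The sequence is Cauchy in $F(\gm)^m$; its limit $x$ satisfies $x\mi\sigma(x)=a_\sigma$ and $v(x_i)\ge w$, simultaneously exhibiting $a\in B^1(G,F(\gm))$ and the required valuation bound.

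The main obstacle lies in direction $(1)\Rightarrow(2)$: one must verify that each iteration step strictly improves both the $\gc_*$-depth of the residual cocycle and the valuation of the new correction, so that the scheme converges $\gm$-adically. This is a bookkeeping argument relying on the integrality of the coefficients of $F$ and the cocycle relation in $F(\gm)$; the converse direction $(2)\Rightarrow(1)$, by contrast, is essentially the explicit computation sketched above.
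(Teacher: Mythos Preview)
The paper does \emph{not} prove this theorem: it is quoted verbatim from \cite{bgm4} (Theorem~2.3.1 there) purely as motivation for studying the modules~$\gc_i$, and the authors explicitly note ``We will never apply this theorem in the paper.'' So there is no proof in the present paper to compare your proposal against.

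That said, your outline is the natural strategy for a result of this type and is in the spirit of the argument in~\cite{bgm4}. The direction $(2)\Rightarrow(1)$ is indeed an explicit valuation estimate once one separates the linear part of the formal subtraction from the higher-degree remainder; the trace bound you invoke (essentially $\tr(\gm^{s})\subset \gm^{s+d}\cap k$, which follows from $v(\gd^{\kk})=d+n-1$ together with the fact that values of $\tr$ lie in $k$) does the job. For $(1)\Rightarrow(2)$ the successive-approximation scheme starting from $x_i^{(0)}=f_i(z)$ with a trace generator $z$ is the expected approach; the point you flag---checking that each correction step strictly increases the depth of the residual cocycle---is exactly where the work lies, and it uses both the $\go$-integrality of the coefficients of $F$ and the cocycle identity to control the quadratic and higher terms. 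One small caution: in your expansion of the remainder you write terms of the form $c\,x^a\tr(x^b y)$; make sure the bookkeeping distinguishes between monomials in the components $x_j$ and in the $\sigma(x_j)$ (for varying $j$), since $F$ is $m$-dimensional and the cross-terms among different coordinates must also be handled.
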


\begin{rema}\label{rint}
1. We are mainly interested in the case where the values of $A$ 
  belong to  $F(\gmo)=F(\gm\cap k)$. Then  $A$ is a homomorphism $G\to F(\gmo)$. 
Moreover, in this case condition 1 of the theorem depends on the modules $\ga_i$ only and condition 2 translates into a Kummer-type condition on the corresponding formal modules; see Definition 4.1.1 and Theorem 4.2 of \cite{bgm4}.  

2. Moreover, the theorem demonstrates that in the case where 
 all  $a_{i\si}$ belong to a subfield $k_0$ of $k$ it suffices to know the modules  $\gaz_i=\ga_i\cap k_0[G]$ only to apply the theorem. For this reason (and also to extend Theorem \ref{tmaini} below to more general extensions) we develop some theory of associated Galois modules of this type. 
 \end{rema}
 
 \subsection{The contents of the paper}
 
 The main specific result of the paper is the following one.
 
 \begin{theo}\label{tmaini}
 Assume $\kk$ is a totally ramified Galois extension with Galois group $G\cong (\zpz)^2$, with ramification jumps that are 
  distinct modulo $p^2$, and $\si_1,\si_2$ are elements of $G$ corresponding to distinct ramification jumps.
 
 Then there exists a piecewise linear function $H:\z^2\to \z$ such that for any $l\in \z$ we have $\ga_l=\lan \pik^{[(l-d-H(i,j)-1)/n]+1} (\si_1-1)^i(\si_2-1)^j|\ 0\le i,j\le p-1\ra_{\go}$. 
\end{theo}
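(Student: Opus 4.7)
The plan is to use the fact that $\{(\si_1-1)^i(\si_2-1)^j : 0 \le i,j \le p-1\}$ is a $k$-basis of $k[G]$, so every $f \in k[G]$ is uniquely a sum $\sum a_{ij}(\si_1-1)^i(\si_2-1)^j$ with $a_{ij}\in k$, and then to reduce membership of $f$ in $\ga_l$ to a coefficient-wise condition on the $a_{ij}$.

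First I would apply the computation of the action of each $(\si_1-1)^i(\si_2-1)^j$ on the valuation filtration of $K$ announced in the abstract (and presumably carried out in earlier sections). This computation produces, for each $(i,j)\in\{0,\ldots,p-1\}^2$, the integer $H(i,j)$ characterised by the relation $H(i,j)+d = \min_{x\in K^*}\bigl(v((\si_1-1)^i(\si_2-1)^j(x)) - v(x)\bigr)$; thus the single-term element $a(\si_1-1)^i(\si_2-1)^j$ with $a\in k$ lies in $\gc_l$ iff $v(a)\ge l-d-H(i,j)$. Since $v$ restricted to $k$ takes values in $n\z$, this is equivalent to $a\in \pik^{[(l-d-H(i,j)-1)/n]+1}\go$, matching the formula in the theorem. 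The piecewise linearity of $H$ on $\z^2$ should emerge from the explicit shape of the action, where $H(i,j)$ depends linearly on $i,j$ within regions demarcated by the two ramification breaks of $\kk$.

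The crucial step is the converse: showing that if $f = \sum a_{ij}(\si_1-1)^i(\si_2-1)^j \in \ga_l$, then each summand already lies in $\ga_l$. I would argue by exhibiting, for each $(i_0,j_0)$, a specific element $x_{i_0,j_0}\in K^*$ on which $v\bigl(a_{i_0,j_0}(\si_1-1)^{i_0}(\si_2-1)^{j_0}(x_{i_0,j_0})\bigr)$ is strictly smaller than $v\bigl(a_{i',j'}(\si_1-1)^{i'}(\si_2-1)^{j'}(x_{i_0,j_0})\bigr)$ for every other pair $(i',j')$. Then no cancellation can occur, $v(f(x_{i_0,j_0}))-v(x_{i_0,j_0})$ equals the $(i_0,j_0)$-th contribution exactly, and the hypothesis $f\in \ga_l$ forces $v(a_{i_0,j_0})\ge l-d-H(i_0,j_0)$. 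The assumption that the ramification jumps are distinct modulo $p^2=n$ enters precisely here: it ensures that the shifts $v((\si_1-1)^i(\si_2-1)^j(x))-v(x)$ lie in pairwise distinct residue classes modulo $n$ as $(i,j)$ ranges over $\{0,\ldots,p-1\}^2$, so that one can choose $v(x_{i_0,j_0})$ in the residue class that isolates the $(i_0,j_0)$-th term.

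The main obstacle will be this non-cancellation argument. It requires tight control over how the shifts $H(i,j)$ depend on the $p$-adic digits of $(i,j)$ and on the two ramification breaks, and a verification that the distinct-modulo-$p^2$ hypothesis is exactly what is needed to rule out coincidences. Without it, two different pairs $(i,j)$ and $(i',j')$ could produce shifts lying in the same residue class, allowing cancellations that mask the true valuation of $f(x)$ and destroy the neat factorisation into coefficient-wise conditions. The piecewise linear nature of $H$ is essential for this step, since the breakpoints of $H$ correspond precisely to the valuations $v(x)$ at which the dominant summand of $f(x)$ switches from one $(i,j)$ to another.
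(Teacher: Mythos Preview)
Your overall strategy---compute $H(i,j)$, then prove a non-cancellation statement so that membership in $\ga_l$ becomes coefficient-wise---is the right shape, but the non-cancellation mechanism you propose does not work. You assert that the distinct-modulo-$p^2$ hypothesis forces the minimal shifts $H(i,j)+d$ to lie in pairwise distinct residue classes modulo $n=p^2$, so that a suitable choice of $v(x)$ isolates each term. This is false. The paper computes (Theorem~\ref{tmain}(1)) that $p\mid H(i,j)$ if and only if $i+j\ge p-1$; there are $p(p+1)/2$ such pairs $(i,j)$, but only $p$ residue classes modulo $n$ are divisible by~$p$. Hence for $p\ge 3$ several pairs with $i+j\ge p-1$ share the same residue class, and no single test element $x$ separates them by valuation alone. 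Cancellation of leading terms among those $f_{ij}$ is a genuine possibility that your argument does not exclude.

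The paper handles this by a finer invariant than the minimal shift. To $f$ with $d(f)=s$ it attaches the polynomial $\gp(f)\in \ovk[X]/(X^n-1)$ recording the residues $r(f(\pi^{j-s-d})/\pi^j)$ for all $j$, not just the minimum. The relevant notion is that $B=\{f_{ij}\}$ be \emph{graded-independent}: for each $s$, the polynomials $\gp(b)$ for $b\in B$ with $d(b)\equiv s\pmod n$ are $\ovk$-linearly independent. When $p\nmid s$ the set $B_s$ is indeed a singleton (this is the part your argument would cover), but when $p\mid s$ the paper shows that distinct elements of $B_s$ have distinct values of $i+j$, whence their $\gp$-images are divisible by distinct powers of $X^p-1$ and are therefore independent. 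Graded-independence then yields the coefficient-wise description of $\ga_l$ via Proposition~\ref{pbase}(2). In short, your proposal proves only the $p\nmid s$ half of graded-independence; the $p\mid s$ half requires an argument at the level of the polynomials $\gp(f_{ij})$, not merely their degrees.
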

\begin{proof} Immediate from Theorem \ref{tmain}(2) combined with  Proposition \ref{pbase}(2).
\end{proof}

\begin{rema}\label{rmaini}
1. The function $H$ is  piecewise linear; it is calculated in \S\ref{smain} as well.

2. It suffices to assume that  
 $\si_1$ and $\si_2$ generate $G$  
 to compute the smallest $l$ such that  $(\si_1-1)^i(\si_2-1)^j\in \ga_l$ for each $(i,j):\ 0\le i,j\le p-1$; see  Theorem \ref{tmain}(1) and Remark \ref{rmain}(3).


\end{rema}

 Now, let us say something about the general results and methods of this paper.

We  develop much general theory for specifying ``nice''  bases of various associated Galois modules (and orders). Whereas most of this theory is inspired by the examples considered in \cite{bgm3}, our definitions and statements related to these {\it graded-independent} and {\it diagonal} bases appear 
to be  completely new. 
 
 However, some of the statements and methods that we use for studying these bases have much in common with  \cite{bgm3}. In particular, some of the calculations are based on  the isomorphism $\phi:\kkk\to \kg$ introduced in \cite{bgm2}, and we prove some new statements related to it.

 In \S\ref{sbase} we introduce some notation  and recall some theory of associated Galois modules. Most of the statements in this section are related to  $\phi$ and $\kkk$.

 In \S\ref{scalc} we develop the theory of graded bases; they allow us to construct 
 "compatible"  bases for all $\ga_i$. We also make some general calculations and prove Theorem \ref{tmaini}.
 
 In \S\ref{smore} we define and study relative Galois modules of the sort $\gaz_i$ (see Remark \ref{rint}(2)). They allow us to extend Theorem \ref{tmaini}; see Theorem \ref{trel}(2).
 
 Next we 
  define some more associated Galois modules (and orders; we define $\gaz(i,j)=\{f\in \kz[G]:\ f(\gm^i)\subset \gm^j \}$). Next we study those (graded) bases that are convenient for constructing bases of modules of this type; we call them {\it diagonal bases}. We describe an algorithm that allows to describe bases of all $\gaz(i,j)$ in terms of a diagonal base, and also prove that certain tame lifts of Galois extensions contain graded bases. 
  
  Some more information on the contents can be found in the beginnings of sections.

 \begin{rema} It appears that computation of associated orders in the cases where no freeness results are known to hold (cf. Remark \ref{rord}(2))  are rather rare. The only example of this sort known to the authors is the (main) Theorem 2.4 of \cite{bylt}.
 \end{rema}



\section{Some notation and basic 
 associated Galois module theory}\label{sbase} 

In this section we 
 recall some basics of the theory of associated Galois modules.
 
 In \S\ref{sbd} we mostly introduce notation.
 
 In \S\ref{skkk} we recall the theory of the isomorphism $\phi:\kkk\to \kg$. The results of this section do not differ much from the corresponding statements of \cite{bgm3}; yet the exposition is new. We will not really need 
 the results of this section till \S\ref{smain}.

\subsection{Basic notation and definitions}\label{sbd} 

$\kk$ is a totally ramified Galois extension of complete discrete valuation fields, $G$ is its Galois group, and $n$ is its degree.

We will write $\gmo\subset \go$ (resp.  $\gm\subset \gok$)  for the maximal ideal and the rings of integers of $k$ and $K$, respectively.

$\ovk=\go/\gmo\cong\gok/\gm$ is the residue field both of $k$ and $K$. 
 Denote by $r$ the canonical epimorphism $\gok\to \ovk$.

 $p>0$ is the characteristic of $\ovk$, $v$ is the discrete valuation on $K$.

We set $d=v(\gd^{\kk})-n+1$ be the  {\it ramification depth} of $\kk$; here $\gd^{\kk}$ is the different of this extension.

 The characteristic of $K$ may be either $p$ or $0$.
 
 $\tr_{E/L}$ is the trace operator corresponding to a finite extension $E/L$ (of complete discrete valuation fields); we will mostly need $\tr=\tr_{\kk}$.
 
 $\pi$ will be a fixed uniformizing element of $K$. 

 \begin{lem}\label{ltr}
 $r\circ \tr$ yields a $\ovk$-linear isomorphism $\gm^{-d}/\gm^{1-d}\to \ovk$.
\end{lem}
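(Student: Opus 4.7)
The plan is to establish three things in order: (a) $\tr(\gm^{-d})\subset\go$ and $\tr(\gm^{1-d})\subset\gmo$, so that $r\circ\tr$ descends to a map $\gm^{-d}/\gm^{1-d}\to\ovk$; (b) this descended map is $\ovk$-linear; (c) it is nonzero. Since both source and target are one-dimensional over $\ovk$, (c) automatically promotes the map to an isomorphism.

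For (a) and (c) I appeal to the standard identification of the inverse different. From $d=v(\gd^{\kk})-n+1$ we have $(\gd^{\kk})\ob=\gm^{1-d-n}$, and this fractional $\gok$-ideal is characterized by $(\gd^{\kk})\ob=\{x\in K:\tr(x\gok)\subset\go\}$; in particular $\tr(\gm^{1-d-n})\subset\go$. The first inclusion in (a) is then immediate from $\gm^{-d}\subset\gm^{1-d-n}$, and the second follows from $k$-linearity of $\tr$ after multiplying through by $\pik$: $\tr(\gm^{1-d})=\pik\tr(\gm^{1-d-n})\subset\pik\go=\gmo$. For (c), were $\tr(\gm^{-d})\subset\gmo$, the analogous shift would force $\tr(\gm^{-d-n})\subset\go$; but any $x$ of valuation $-d-n$ fails to lie in $(\gd^{\kk})\ob$, so some $y\in\gok$ produces $\tr(xy)\notin\go$ with $xy\in\gm^{-d-n}$, contradicting the supposed inclusion.

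The only real subtlety is (b), because $\tr$ is $k$-linear but not $\gok$-linear, so the $\ovk$-action on the source (coming from $\gok/\gm$) must be matched against the $\ovk$-action on the target (coming from $\go/\gmo$) by hand. I will use that in a totally ramified extension $\gok=\go+\gm$: writing $a\in\gok$ as $a_0+a_1$ with $a_0\in\go$ a lift of $\bar a\in\ovk$ and $a_1\in\gm$, for $x\in\gm^{-d}$ one has $\tr(ax)=a_0\tr(x)+\tr(a_1x)$, and the second summand lies in $\tr(\gm^{1-d})\subset\gmo$ by (a), so $r(\tr(ax))=\bar a\cdot r(\tr(x))$. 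I do not anticipate a genuine obstacle; once the inverse different characterization is invoked, the rest is routine bookkeeping with valuations.
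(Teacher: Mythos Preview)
Your argument is correct and follows essentially the same route as the paper: both use the characterization of the inverse different to see that $\tr$ carries $\gm^{-d}$ into $\go$ and $\gm^{1-d}$ into $\gmo$ but does not carry $\gm^{-d}$ into $\gmo$, and then conclude by one-dimensionality. The paper states the slightly stronger equalities $\tr(\gm^{-d})=\go$ and $\tr(\gm^{1-d})=\gmo$ and calls the $\ovk$-linearity ``obvious'', whereas you spell out the linearity via the decomposition $\gok=\go+\gm$; this extra care is harmless and arguably clearer.
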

 \begin{proof}
 Essentially by the definition of the different,  $\tr(\gm^{1-d})= \gmo$ and  $\tr(\gm^{-d})=\go$. Hence we get a non-zero map which is obviously $\ovk$-linear. Since the multiplication by $\pi^{-d}$ gives an isomorphism $\gm^{-d}/\gm^{1-d}\to \ovk$,  $r\circ \tr$ is an isomorphism  indeed.
\end{proof}

Consequently, the element $\cpi=\tr(\pi^{-d})$ belongs to $ \go^*$.

Now let us define some associated Galois modules; we will define more of them in \S\ref{smore} below. 

 For $i\in \z$ we set $$\gc_i=\{f\in K[G]:\
\min_{x\in K^*}(v(f(x))-v(x))\ge i\};\ \ga_i=\gc_i\cap k[G];$$
here 
we take the obvious action of the set $K[G]$  on $K$.\footnote{Note that the usual group algebra multiplication in $K[G]$ does not correspond to the composition of endomorphisms of $K$. Yet this problem does not occur if one multiplies elements of $k[G]$ only; thus $k[G]$ acts on $K$ as a ring.}
Obviously, $\gc_i$  (resp. $\ga_i$) give a separated exhaustive filtration on $K[G]$ (resp. $k[G]$).

Clearly, for $\si\in G$ we have $\si-1\in \ga_i$ if and only if $i\ge h$, where $h$ is the  (lower) ramification jump for $\si$ (see Definition II.4.5 of \cite{fev}); 
 cf. the proof of Theorem \ref{tcomp}(1) below.

We will write $d(f)=i$ whenever $f\in K[G]\setminus\ns$ and $f\in \gc_{i+d}\setminus  \gc_{i+d+1}$; we will justify this shift by $d$ (along with the $\rcpi$ multiplier below) in \S\ref{skkk}. Obviously, this definition gives a well-defined function $\kg\setminus \ns\to \z$.

Let us also describe functions corresponding to  factors of the filtration $\gc_i$. 

For $i\in \z$ and  $f\in \gc_{i}$ we set \begin{equation}\label{epi}
p_i(f)=\rcpi \sum_{j=0}^{n-1}r(f(\pi^{j-i})/\pi^j) X^j\in \rr= \ovk[X]/(X^n-1).   
\end{equation}

We will write $p_i(f)\sim g\in \rr$ if there exists $c\in \ovk\setminus \ns$ such that $p_i(f)=cg$.

Moreover,  if $f\in \gc_{i} \setminus  \gc_{i+1}$ 
 then we set $\gp(f)=p_i(f)$.  Thus $\gp$ gives a well-defined function  $\kg\setminus \ns\to \rr$. 

\begin{lem}\label{lpi}
$p_i$ gives a $\ovk$-linear isomorphism $\gc_{i} /  \gc_{i+1}\to \rr$ 
(of $\ovk$-vector spaces).
\end{lem}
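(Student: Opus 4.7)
The plan is to establish four facts: (i) $p_i$ is well-defined on $\gc_i$; (ii) it is $\go$-linear; (iii) it vanishes on $\gc_{i+1}$; and (iv) the induced $\ovk$-linear map $\gc_i/\gc_{i+1}\to \rr$ is bijective. Items (i)--(iii) are direct: applying $f\in \gc_i$ to $x=\pi^{j-i}$ gives $v(f(\pi^{j-i}))\ge j$, so $f(\pi^{j-i})/\pi^j\in \gok$ and $r$ applies; additivity and $\go$-linearity pass through $r$; and if $f\in \gc_{i+1}$, the same computation yields $v(f(\pi^{j-i}))\ge j+1$ and the reductions vanish. Since $\pik\gc_i\subset \gc_{i+n}\subset \gc_{i+1}$, the $\go$-action on the quotient factors through $\ovk$.

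For injectivity, the key observation is that membership in $\gc_l$ can be checked on any $n$ consecutive powers of $\pi$. Writing $u_\pi := \pi^n/\pik\in \gok^*$ in the $\go$-basis $1,\pi,\ldots,\pi^{n-1}$ of $\gok$ as $u_\pi=\sum_{s=0}^{n-1}b_s\pi^s$ with $b_0\in \go^*$, the $k$-linearity of $f$ gives $f(\pi^{t_0+n-1})=\pik\sum_s b_s f(\pi^{t_0-1+s})$; the $s\ge 1$ terms have valuation $\ge t_0+l$ under the inductive hypothesis, while the left-hand side contributes $\ge t_0+n-1+l$, so unit-ness of $b_0$ yields $v(f(\pi^{t_0-1}))\ge t_0-1+l$, extending the bound downward (upward is symmetric). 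Applied to $f$ with $p_i(f)=0$, one has $v(f(\pi^t))\ge t+i+1$ for $t=-i,\ldots,n-1-i$, which then extends to all $t\in \z$; in particular $f\in \gc_{i+1}$.

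Surjectivity is the main obstacle. The plan is to exhibit, for each $m=0,\ldots,n-1$, an element $\tilde e_m\in \gc_i$ with $p_i(\tilde e_m)$ a nonzero scalar multiple of $X^{(i+m)\bmod n}$, so that varying $m$ hits every monomial. Set $\tilde e_m=\sum_{\sigma\in G}\pi^{i+m}\sigma(\pi^{-d-m})\sigma$, so that $\tilde e_m(x)=\pi^{i+m}\tr(\pi^{-d-m}x)$. The bound $v(\tr(\alpha))\ge n\lceil(v(\alpha)+d)/n\rceil$---an easy consequence of Lemma~\ref{ltr} combined with $\tr(\pik\alpha)=\pik\tr(\alpha)$---yields $\tilde e_m\in \gc_i$. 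Direct computation shows the $X^j$-coefficient of $p_i(\tilde e_m)$ vanishes except when $n\mid(j-i-m)$, and for the unique such $j\in\{0,\ldots,n-1\}$ the reduction equals $r(u_\pi)^{-(j-i-m)/n}\in \ovk^*$ by Lemma~\ref{ltr}. This gives surjectivity and completes the proof.
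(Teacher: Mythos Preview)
Your proof is correct. The paper's own proof is a single line (``Immediate from our definitions''), so you have supplied considerably more detail than the authors felt necessary; nonetheless the verification you give is sound.

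Two remarks are worth making. First, your explicit witnesses for surjectivity, $\tilde e_m=\pi^{i+m}\sum_{\sigma}\sigma(\pi^{-d-m})\sigma$, are precisely $\phi(\pi^{i+m}\otimes\pi^{-d-m})$ in the notation introduced later in Theorem~\ref{tkkk}; in effect you are anticipating the computation in Theorem~\ref{tkkk}(5). An alternative, and arguably the argument the authors have in mind, is a dimension count: once injectivity is known for each $i$, the equality $\sum_{i=0}^{n-1}\dim_{\ovk}(\gc_i/\gc_{i+1})=\dim_{\ovk}(\gc_0/\pik\gc_0)=n^2$ (using that $K[G]\cong\mathrm{End}_k(K)$ via the action, so $\gc_0$ is an $\go$-lattice of rank $n^2$) forces each quotient to have dimension exactly $n$, hence each $p_i$ is surjective. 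Your constructive approach is of course also valid and more explicit.

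Second, a harmless slip: the nonzero coefficient you compute is actually equal to $1$, not $r(u_\pi)^{-(j-i-m)/n}$. Writing $q=(j-i-m)/n$ one finds $\pi^{i+m-j}\tr(\pi^{j-i-m-d})=u_\pi^{-q}\tr(u_\pi^{q}\pi^{-d})$, and the $\ovk$-linearity in Lemma~\ref{ltr} gives $r(\tr(u_\pi^{q}\pi^{-d}))=r(u_\pi)^{q}r(\cpi)$, so the two powers of $r(u_\pi)$ cancel and the $\rcpi$ normalisation leaves exactly $1$. This does not affect your conclusion, since all you need is that the coefficient is a unit.
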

 \begin{proof}
Immediate from our definitions. 
\end{proof}


\subsection{Definitions and statements related to $\kkk$} \label{skkk} 

The algebra $\kkk$ is really important for our paper. Some of its properties extend to more general field extensions, and we will start from statements of this type. 

For this purpose we introduce the following notation.

We will always assume below that $\kp$ is a totally ramified extension of complete discrete valuation fields,\footnote{In several statements we don't need $\kp$ to be Galois. Note however that  we don't really need non-Galois extensions in this paper. }  $n'$ is its degree, $\gop$ (resp. $\gopo$) is the ring of integers of $K'$ (resp. $k'$),  $\gmp$ is the valuation ideal of $K'$,  $\ovkp$ is the residue field of $K'$ and $k'$, and $\pip\in \gmp$ is an uniformizing element.

For $i\in \z$ we set $X_i^{\kkp}=\sum_{j\in \z} \gmp^j\otimes \gmp^{i-j}\subset \kkkp$. 

Below we will omit the  upper index $\kkp$ in all the notation in the case $\kp=\kk$. We write just $t$ for the automorphism of $\kkp$ (or of $\kkk$) that 
  swaps the factors of the tensor square.
 
 By default, all tensor products below are that over the ring $\go$. Yet when we will write $\otimes$  when treating subsets and elements of $\kkp$ then we will assume that this symbol means $\otimes_{\gopo}$.

\begin{pr}\label{pkkp}
Assume $i,j\in\z$.

1. $X^{\kkp}_i \cdot X^{\kkp}_j\subset X^{\kkp}_{i+j}$. 

Consequently, $X^{\kkp}_0$ is a subring of $\kkp$ 
 and $X^{\kkp}_i$ is an $X^{\kkp}_0$-module.
 

2. Moreover, 
  $X^{\kkp}_i/X^{\kkp}_{i+1}$ is a one-dimensional free module over the ring $X^{\kkp}_0/X^{\kkp}_{1}$. 
  
 3.  $X^{\kkp}_i=\bigoplus_{l=0}^{n'-1}\pip^l\otimes \gmp^{i-l}$.
  
 4. For any $\eps\in \gop^*$ we have $\eps\ot \eps\ob\in 1+X^{\kkp}_{1}$.
 
 Consequently, for $x\in K'{}^*$ and $\eps_1,\eps_2\in \gop^*$ the class of $\eps_1x\otimes \eps_2 x\ob$ in  $X^{\kkp}_0/X^{\kkp}_{1}$ equals that of $\eps_1\eps_2(\pip\otimes \pip{}\ob)^{v'(x)}$, where $v'$ is the discrete valuation on $K'$, and the class of $x\otimes  x\ob$  is $1$ if  $n'\mid v'(x)$. 
 
 5. There exist a unique isomorphism $$\rxp: X^{\kkp}_0/X^{\kkp}_{1}\to  \rrp=\ovkp[X]/(X^{n'}-1)$$ of $\ovkp$-algebras with a unit that sends $\pip\otimes \pip{}\ob$ into $X$.
 Moreover, this element of $X^{\kkp}_0/X^{\kkp}_{1}$ along with the aforementioned isomorphism do no not depend on the choice of $\pip$.
 
 6. $t$ is a ring automorphism that restricts to $X^{\kkp}_i$.
  
  Moreover, for any $\al\in X^{\kkp}_0$ we have $\rxp(t(\al))=  \rxp(\al)(X\ob)$; 
   note here that $X$ is invertible in $\rrp$. 
 \end{pr}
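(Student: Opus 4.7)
The plan is to handle the six parts in the order $1, 3, 2, 4, 5, 6$: parts 1 and 3 give the basic structural description, part 4 is the main computational input, and parts 2, 5, 6 follow fairly formally.

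Part 1 is direct from the definitions, since $(\gmp^j \otimes \gmp^{i-j})(\gmp^{j'} \otimes \gmp^{i'-j'}) \subseteq \gmp^{j+j'} \otimes \gmp^{i+i'-j-j'} \subseteq X^{\kkp}_{i+i'}$. For part 3, the key input is that since $\kp$ is totally ramified, $\gop = \bigoplus_{l=0}^{n'-1} \gopo {\pip}^l$, so $\kkp = \bigoplus_{l=0}^{n'-1} {\pip}^l \otimes K'$ as a $K'$-module via the second factor. The inclusion $\bigoplus_l {\pip}^l \otimes \gmp^{i-l} \subseteq X_i^{\kkp}$ is clear. For the reverse, I would expand an element ${\pip}^j \otimes y$ of $\gmp^j \otimes \gmp^{i-j}$ by writing ${\pip}^j = \sum_l d_l {\pip}^l$ in the $k'$-basis of $K'$; because the terms $d_l {\pip}^l$ have pairwise distinct valuations modulo $n'$, each must have valuation at least $j = v'({\pip}^j)$, so $v'(d_l) \ge j - l$, whence ${\pip}^j \otimes y = \sum_l {\pip}^l \otimes d_l y$ with $v'(d_l y) \ge i - l$. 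Part 2 then follows: the class of $1 \otimes {\pip}^i$ generates $X_i^{\kkp}/X_{i+1}^{\kkp}$ as a free module of rank $1$ over $X_0^{\kkp}/X_1^{\kkp}$, since multiplication by the class of ${\pip}^l \otimes {\pip}^{-l}$ produces ${\pip}^l \otimes {\pip}^{i-l}$, the generator of the $l$-th summand from part 3, and a surjective map of $\ovkp$-vector spaces of equal dimension $n'$ is an isomorphism.

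Part 4 is the computational heart. Writing $\eps = \sum_l a_l {\pip}^l$ with $a_0 \in \gopo^*$, one gets $\eps \otimes \eps\ob = \sum_l {\pip}^l \otimes a_l \eps\ob$; the $l = 0$ coefficient is $\equiv 1 \pmod{\gmp}$ and for $l \ge 1$ lies in $\gop \subseteq \gmp^{1-l}$, so $\eps \otimes \eps\ob \equiv 1 \pmod{X_1^{\kkp}}$. The consequence for $\eps_1 x \otimes \eps_2 x\ob$ follows by factoring $x = {\pip}^{v'(x)}\eta$ with $\eta \in \gop^*$ and noting that $\eps_1 \otimes \eps_2 \equiv \eps_1 \eps_2 \otimes 1 \pmod{X_1^{\kkp}}$ is another easy basis computation; the final assertion for $n' \mid v'(x)$ uses ${\pip}^{n'} = u \varpi$ (with $u \in \gop^*$, $\varpi$ a uniformizer of $k'$) to push $\varpi$ across the tensor and reduce to $u \otimes u\ob \equiv 1$. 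Part 5 is now formal: define $\rxp$ by sending $X$ to the class $Y = \overline{{\pip} \otimes {\pip}^{-1}}$; the relation $Y^{n'} = 1$ is precisely the computation just finished, so the $\ovkp$-algebra map $\ovkp[X]/(X^{n'}-1) \to X_0^{\kkp}/X_1^{\kkp}$ is well-defined. Both sides are $\ovkp$-vector spaces of dimension $n'$, and $Y^l$ hits the generator of the $l$-th summand from part 3, so $\rxp$ is surjective and hence an isomorphism; uniqueness is forced by the image of $X$, and independence of ${\pip}$ follows since any other uniformizer is $\eps {\pip}$ for some $\eps \in \gop^*$, with the extra factor $\eps \otimes \eps\ob$ being trivial modulo $X_1^{\kkp}$ by the first half of part 4. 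Part 6 is direct: $t$ swaps $\gmp^j \otimes \gmp^{i-j}$ and $\gmp^{i-j} \otimes \gmp^j$ in the definition of $X_i^{\kkp}$, and $\rxp(t(\al)) = \rxp(\al)(X\ob)$ reduces to $t(Y) = Y\ob$, which holds because $Y \cdot t(Y) = ({\pip} \otimes {\pip}^{-1})({\pip}^{-1} \otimes {\pip}) = 1$.

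The main obstacle is the reverse containment in part 3: it requires the valuation estimate $v'(d_l) \ge j - l$ for the coefficients of ${\pip}^j$ in the $k'$-basis of $K'$. This is the standard fact that in a totally ramified extension the $n'$ basis monomials have pairwise distinct valuations modulo $n'$, so in any sum $\sum d_l {\pip}^l$ the summands cannot cancel in valuation and each must individually have valuation at least that of the sum. Once this direct-sum description of $X_i^{\kkp}$ is in hand, the rest of the proposition is essentially bookkeeping around it.
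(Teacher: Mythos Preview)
Your proof is correct and follows essentially the same approach as the paper. The only minor differences are in parts 3 and 4: for part 3 the paper expands each $\gmp^l$ in the $\gopo$-basis $\{\pip^{l+a}\}_{0\le a\le n'-1}$ and sums, whereas you use a valuation argument on the $k'$-basis coefficients; for part 4 the paper writes $\eps=\eps'(1+\de)$ with $\eps'\in k'$ and $\de\in\gmp$ (using that $\kp$ is totally ramified) instead of your basis expansion. Both variants are equally direct, and the remaining parts match the paper's argument almost verbatim.
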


\begin{proof}.
1. Obvious.

2. The previous assertion implies that $X^{\kkp}_i/X^{\kkp}_{i+1}$ is a module over $X^{\kkp}_0/X^{\kkp}_{1}$ indeed. It remains to note that the multiplications by $1\otimes \pip^{i}$ and $1\otimes \pip^{-i}$ give mutually inverse $X^{\kkp}_0/X^{\kkp}_{1}$-module isomorphisms between $X^{\kkp}_0/X^{\kkp}_{1}$ and  $X^{\kkp}_i/X^{\kkp}_{i+1}$.

3. Since 
$\{\pip^{l+a},\ 0\le a\le n'-1\}$, give a $\gopo$-base of $\gmp^{l}$ for any $l\in \z$, and $\gmp\subset K'$,  $\bigoplus_{0\le a\le n'-1} \pip^{l+a}\otimes_{\gopo} \gmp^{i-l}= \gmp^l\otimes_{\gopo} \gmp^{i-l}\subset \kkp$. 
 Summing up these equalities for 
all $l,\ 0\le l\le n'-1$, we easily deduce the statement in question.

4. Firstly, $\kp$ is totally ramified; hence $\eps$ can be presented as $\eps'(1+\de)$, where $\eps\in k'$ and $\de\in \gmp$. Hence $\eps\otimes \eps\ob=\eps'\otimes \eps'{}\ob\cdot (1+\de)\otimes (1+\de)\ob=(1+\de)\otimes (1+\de)\ob\in 1+X^{\kkp}_{1}$ indeed.
This immediately implies 
$\eps_1(x\otimes \eps_2 x\ob- \eps_2 (\pip\otimes \pip{}\ob)^{v'(x)})\in X^{\kkp}_{1}$

Lastly, if 
 $v'(x)$ is divisible by $n'$ then $x=x'\eps_x$ for some $x'\in k'$ and $\eps_x\in \gop^*$; hence the images of  $x\otimes x\ob$ and $x'\otimes x'{}\ob=1$ in $X^{\kkp}_0/X^{\kkp}_{1}$ coincide.

5. 
Assertion 3 easily implies that $X^{\kkp}_0/X^{\kkp}_{1}$ is generated by the classes of  $\pip^l\otimes \pip^{-l}$, $l\ge 0$,  as an $\ovkp$-vector space, and 
   the classes of $1,  \pip\otimes \pip^{-1},\dots, \pip^{n'-1}\otimes \pip^{1-n'}$ are $\ovkp$-independent in  it. Hence to construct the isomorphism in question it suffices to verify that   $\pip^{n}\otimes \pip^{-n}\in 1+X^{\kkp}_{1}$. Now, 
    this statement is given by the previous assertion, that also immediately implies the independence statments in our assertion.
    
    6. All the statements in question except the last one are obvious.
    
    The 
     equality is easy as well. We clearly can present $\al$ as $\sum_{0\le l\le n'-1} a_{l} \pip^l\otimes \pip^{-l}+\al'$ for some $a_{l}\in \gopo$   and $\al'\in X^{\kkp}_1$; see 
     assertions 3 and 5. Hence it suffices to verify the statement for $\al=\pip^l\otimes \pip^{-l}$, and in this case it is obvious.
       
 \end{proof}

Now we return to the extension $\kk$ and relate $\kkk$ to $K[G]$. We recall a definition that is important for our arguments. 

\begin{theo}\label{tkkk}
Assume $i,j\in\z$.

1. The $k$-vector space homomorphism  $\kkk\to K[G]$ that sends $x\otimes y$ (for $x,y\in K$) into  $x\sum_{\si\in G}\si(y)\si$, is bijective.\footnote{And $\phi$ is also $K$-linear if we multiply elements on $K$ by the first component in $\kkk$.}

2.  
 For any $ \sum x_i\otimes y_i\in \kkk$ and  $z\in K$ we have
$\phi(\sum x_i\otimes y_i)(z)=\sum_i x_i \tr(y_i z)$.

3. For any $\al,\be\in \kkk$ we have $\phi(\al)*\phi(\be)=\phi(\al\be)$, where we set $$\sum_{\si\in G} a_{\si}\si *\sum_{\si\in G} b_{\si}\si=\sum_{\si\in G} a_{\si}b_{\si}\si.$$
  
 4. $\gc_{i+d}=\phi(X_{i})$. 
 
 5. 
 For any $i\in \z$ the following two functions on $X_i$ coincide: $\rx_i=x\mapsto \rx(x\cdot (1\otimes \pi^{-i}))$ and  $p_{d+i}\circ \phi$.

6. $\phi(t(\al))=\sum_{\si\in G}\si(a_{\si\ob})\si $, where $\phi(\al)=\sum_{\si\in G}a_{\si}\si $.

\end{theo}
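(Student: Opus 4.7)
The plan is to prove the six assertions in the order 2, 1, 3, 6, 5, 4. Assertion 2 is a straightforward unraveling: on a pure tensor one has $\phi(x\otimes y)(z)=x\sum_{\si}\si(y)\si(z)=x\sum_\si\si(yz)=x\tr(yz)$, and the general case follows by $k$-linearity. Granted 2, assertion 1 is an application of the non-degeneracy of the trace form of the separable Galois extension $\kk$: writing $\al=\sum_j e_j\otimes y_j$ in a fixed $k$-basis $\{e_j\}$ of $K$, the vanishing of $\phi(\al)(z)=\sum_j e_j\tr(y_jz)$ for every $z\in K$ forces $\tr(y_jz)=0$ for all $z$ and all $j$, hence $y_j=0$; injectivity combined with $\dim_k\kkk=n^2=\dim_k\kg$ yields bijectivity. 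Assertions 3 and 6 are each verified by direct computation on pure tensors and extension by bilinearity: for 3, with $\al=x\otimes y$ and $\be=u\otimes v$, both $\phi(\al\be)$ and $\phi(\al)*\phi(\be)$ evaluate to $xu\sum_\si\si(yv)\si$; for 6, the coefficients $a_\si=x\si(y)$ satisfy $\si(a_{\si\ob})=\si(x)y$, so $\sum_\si\si(a_{\si\ob})\si=y\sum_\si\si(x)\si=\phi(y\otimes x)=\phi(t(x\otimes y))$.

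Before attacking assertion 5 I would record the one-sided inclusion $\phi(X_i)\subset\gc_{d+i}$. Decomposing $\al\in X_i$ as $\sum_{l=0}^{n-1}\pi^l\otimes z_l$ with $z_l\in\gm^{i-l}$ (Proposition \ref{pkkp}(3)) and using $v(\tr(u))\ge v(u)+d$ (an immediate consequence of Lemma \ref{ltr}), assertion 2 gives $v(\phi(\al)(z))\ge\min_l(l+v(z_l)+v(z)+d)\ge i+d+v(z)$ for any $z\in K^*$. Both maps in assertion 5 are then $\go$-linear on $X_i$ and vanish on $X_{i+1}$, the first by Proposition \ref{pkkp}(5) and the second because $\phi(X_{i+1})\subset\gc_{d+i+1}$ and $p_{d+i}$ kills $\gc_{d+i+1}$ by Lemma \ref{lpi}. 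So by Proposition \ref{pkkp}(3,5) it suffices to match the two maps on the basis $\{\pi^l\otimes\pi^{i-l}\}_{0\le l\le n-1}$ of $X_i/X_{i+1}$. On these, $\rx_i$ gives $X^l$ directly from the definition of $\rx$; for $p_{d+i}\circ\phi$ one must compute $r(\pi^{l-j}\tr(\pi^{j-d-l}))$ for $0\le j\le n-1$. Writing $\pi^n=\pi_k w$ with $w\in\gok^*$ and invoking Lemma \ref{ltr}, one checks that this residue equals $r(\cpi)$ for $j=l$ and vanishes otherwise, because for $j\ne l$ the argument of $\tr$, possibly after extracting a compensating power of $\pi_k$, lies in $\gm^{1-d}$; after multiplication by the normalization $r(\cpi)\ob$ in (\ref{epi}) one obtains $X^l$, matching $\rx_i$ exactly.

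Assertion 4 then follows by combining 1 and 5 with the already-established inclusion. Indeed, if some $f\in\gc_{d+i}$ were not in $\phi(X_i)$, then bijectivity writes $f=\phi(\al)$ and the equality $\cap_m X_m=\ns$ (clear from Proposition \ref{pkkp}(3)) provides a maximal integer $j$ with $\al\in X_j\setminus X_{j+1}$; necessarily $j<i$, and assertion 5 produces $p_{d+j}(f)=\rx_j(\al)\ne 0$, contradicting $f\in\gc_{d+i}\subset\gc_{d+j+1}$. The main obstacle in the whole plan is the residue computation inside assertion 5, since it is the only step where the shift by $d$ and the normalizing factor $r(\cpi)$ have to be matched by an arithmetic manipulation with the uniformizers; everything else is either a tautology, a non-degeneracy argument, or a formal consequence of bijectivity.
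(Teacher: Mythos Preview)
Your proposal is correct. For assertions 5 and 6 your argument is essentially identical to the paper's: both reduce assertion 5 to checking the two maps agree on the basis $\{\pi^l\otimes\pi^{i-l}\}$ of $X_i/X_{i+1}$ and invoke Lemma~\ref{ltr} for the residue computation, and both prove assertion 6 by direct calculation on a pure tensor $x\otimes y$.

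Where your treatment differs is in assertions 1--4: the paper simply cites \cite{bgm3} for these, whereas you supply self-contained arguments. Your proof of assertion 1 via non-degeneracy of the trace pairing plus a dimension count, and of assertions 2, 3 by unraveling on pure tensors, are the natural direct arguments. Your handling of assertion 4 is a genuine addition over what the paper writes out: rather than quoting the cited result, you first establish the easy inclusion $\phi(X_i)\subset\gc_{d+i}$ from the trace bound $v(\tr(u))\ge v(u)+d$, and then deduce the reverse inclusion from assertions 1 and 5 by observing that any $f=\phi(\al)\in\gc_{d+i}$ with $\al\in X_j\setminus X_{j+1}$ for some $j<i$ would force $p_{d+j}(f)=\rx_j(\al)\neq 0$, contradicting $f\in\gc_{d+j+1}$. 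This makes the whole theorem internally self-contained, which the paper's exposition does not attempt.
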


 \begin{proof}
  Assertions 
   1---4 are given by Lemma 1.1.1, Proposition 1.3.1, and Proposition 2.4.4 of \cite{bgm3}, respectively.
 
 5. Both of these functions are clearly additive and annihilate $X_{d+i+1}$. Moreover, they are obviously $\ovk$-linear if considered as functions from $X_{d+i}/X_{d+i+1}$. Hence
  it suffices to compare their values on $\pi^j\otimes \pi^{i-j}$ for 
  $0\le  j\le n-1$. 
   Now,  $\rx(\pi^j\otimes \pi^{-j})=X^j$; see Proposition \ref{pkkp}(4). Next, 
    $$p_{d+i}( \phi (\pi^j\otimes \pi^{i-j}))=     \rcpi \sum_{l=0}^{n-1}r(\pi^j\tr(\pi^{l-j-d}) \pi^{-l}) X^l.$$ Applying Lemma \ref{ltr} we easily obtain  $p_{d+i}( \phi (\pi^j\otimes \pi^{i-j}))=X^j$, and this concludes the proof.
    
    6. Obviously, it suffices to verify this equality for $\al =x\otimes y$, where $x,y\in K$. Now, $\phi(y\otimes x)=y\sum_{\si\in G} \si(x)\si=\sum_{\si\in G} \si(x\si\ob(y))\si =\sum_{\si\in G}\si(a_{\si\ob})\si$ indeed.


\end{proof}

\begin{rema}\label{rdp}
1. In \cite[Definition 2.8.1]{bgm3} $d_i(f)$ 
for $f\in \gc_{i+d}$  was essentially defined as 
 $\rx_i$ (see Theorem \ref{tkkk}(5)). Thus we have just checked the compatibility of the two definitions.

2. Now we 
 will study the relations between distinct extensions and the corresponding $\kkp$'s. 
  We are mainly interested in the case of Galois extensions, and it will be convenient for us to 
denote the bigger extension by $\kk$. Note hower, that Proposition \ref{pkkk}(1,2) below 
  is clearly valid without assuming that the extension $\kk$ is Galois.



\end{rema}

\begin{pr}\label{pkkk}
Assume 
$k'\subset K'\subset K$, $k' \subset k \subset K$,
$k/k'$ is a finite extension, and $\al'
\in X_i^{\kkp}$ for some $i\in \z$. 

Denote the ramification index of $K/K'$ by $e$ and the image of $\al'$ in $\kkk$ by $\al$.


1. Assume that $\al'= \sum_{0\le l\le n'-1} a_{l}\pip^j\otimes \pip^{i-l}+\be$ for some $\be\in X_{i+1}^{\kkp}$ and  $a_{ij}\in \gopo$ (cf. Proposition \ref{pkkp}(3)). Then $\al\in X_{ie}$ 
and $\rx_{ie}(\al)=c^i\sum r(a_{l})X^{le}$, where $c=r(\pip/\pi^{e})$. 


2. Assume   
$p\nmid [k:k']$, 
    $n$ is a power of $p$, and $K=K'k$. Then $n=n'$ and  $\al' \in X_{i+1}^{\kkp}$ if and only if $\al \in  X_{(i+1)e}$.

3. Assume that  $k=k'$ and $K'/k$ is a Galois subextension of  $\kk$. Then $\phi(\al)=\phi^{\kkp}(\al')\circ \tr_{K/K'}$.

\end{pr}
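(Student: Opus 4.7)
The plan is to treat the three parts separately. The common thread is to exploit Proposition \ref{pkkp}, which describes the filtration on $X_i^{\kkp}$ and its associated graded, together with the inclusion-induced map $\kkp\to\kkk$.

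For part 1, I write $\pip = u\pi^e$ with $u\in\gok^*$, so that $c=r(u)$ by definition. The image in $\kkk$ of $a_l\pip^l\otimes\pip^{i-l}$ is $a_l u^l\pi^{le}\otimes u^{i-l}\pi^{(i-l)e}$, visibly in $X_{ie}$; hence $\al\in X_{ie}$. To compute its class in $X_{ie}/X_{ie+1}$ I multiply by $1\otimes\pi^{-ie}$ to land in $X_0$ and apply Proposition \ref{pkkp}(4) to $\kk$: the class modulo $X_1$ is $r(a_l)\,r(u)^i(\pi\otimes\pi^{-1})^{le}=r(a_l)c^i(\pi\otimes\pi^{-1})^{le}$. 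Applying $\rx$ gives the term $r(a_l)c^i X^{le}$, and summation yields the claimed formula. The remainder $\be\in X_{i+1}^{\kkp}$ contributes nothing: by Proposition \ref{pkkp}(3), $\be=\sum_l\pip^l\otimes b_l$ with $b_l\in\gmp^{i+1-l}$, and each summand maps into $X_{(i+1)e}\subseteq X_{ie+1}$ since $e\geq 1$.

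For part 2, I first establish $n=n'$ via the compositum formula $[K:K']=[k:K'\cap k]$, which gives $n'=n[K'\cap k:k']$; the hypotheses $p\nmid [k:k']$, $n=p^s$, together with $K'/k'$ totally ramified, force $[K'\cap k:k']=1$. Granted $n=n'$, the forward implication is part 1 applied with $i+1$ replacing $i$. For the converse, suppose $\al'\notin X_{i+1}^{\kkp}$; then the expansion from Proposition \ref{pkkp}(3) has some $a_l$ with $r(a_l)\neq 0$. The key point is that $e$ coincides with the ramification index $e_0$ of $k/k'$ (from the relation $en'=ne_0$ with $n=n'$), and $e_0\mid [k:k']$ is coprime to $n=p^s$. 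Therefore the monomials $X^{le}$, $0\le l\le n-1$, form an $\ovk$-basis of $\rr$, and part 1 gives $\rx_{ie}(\al)=c^i\sum_l r(a_l)X^{le}\neq 0$, so $\al\notin X_{ie+1}\supseteq X_{(i+1)e}$.

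For part 3, both sides of the identity are $k$-linear in $\al'$, so I reduce to the case $\al'=x\otimes y$ with $x,y\in K'$. Theorem \ref{tkkk}(2) turns the claim into $x\tr_{K/k}(yz)=x\tr_{K'/k}(y\tr_{K/K'}(z))$ for all $z\in K$, which follows from the $K'$-linearity of $\tr_{K/K'}$ (allowing $y$ to be pulled inside) combined with the transitivity $\tr_{K/k}=\tr_{K'/k}\circ\tr_{K/K'}$. The main obstacle will be the ramification-index step $n=n'$ in part 2; the remainder is routine bookkeeping with the filtration isomorphisms of Proposition \ref{pkkp} and the trace formula of Theorem \ref{tkkk}(2).
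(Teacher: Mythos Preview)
Your argument is correct and follows essentially the same route as the paper: reduce part 1 to monomials $\pip^l\otimes\pip^{i-l}$ and apply Proposition~\ref{pkkp}(4); for part 2 use part 1 together with the injectivity of the graded map (which you make explicit via $e=e_0\mid[k:k']$, hence $\gcd(e,n)=1$); for part 3 reduce to simple tensors and invoke Theorem~\ref{tkkk}(2) plus transitivity of the trace. Your justification of $n=n'$ (via the compositum formula and $[K'\cap k:k']=1$) is at the same level of detail as the paper's appeal to coprimality of degrees; neither argument spells out why $n'$ must be a $p$-power, but this is not an issue in the paper's applications.
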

\begin{proof}
1. Once again, it clearly suffices to verify this statement in the case $\al'=\pip^l\otimes \pip^a$ for $0\le l<n$, $a\ge i-l$. Now, if $a>i-l$ then we obviously have $\al\in X_{(i+1)e}$; thus we can assume $a=i-l$. In this case $\al\in X_{ie}$ and $\rx_{ie}(\al)=\rx(\pip^l\otimes (\pip^{i-l}/\pi^{ei}))=
  \rx((\pip/\pi^{e})^{i}\otimes 1 \cdot \pi^{le}\otimes \pi^{-le}\cdot (\pip/\pi^{e})^{l-i}\otimes  (\pip/\pi^{e})^{i-l} 
=c^iX^{le}$ indeed; see Proposition \ref{pkkp}(4).

2. Since the degrees of $\kp$ and $k/k'$ are coprime, these extensions are linearly disjoint. Hence $n=n'$ indeed.

To verify the equivalence in question is suffices to apply the previous assertion and note that the corresponding homomorphism $X^{\kkp}_0/X^{\kkp}_{1}\to X_0/X_{1}$ is injective. 

3. Clearly, it suffices to verify that for any $z\in K$ we have  $\phi(\al')(z)=\phi^{\kkkp} (\al') (\tr_{K/K'}z).$

Now if $\al'=\sum x_i\otimes y_i$ for $x_i,y_i\in K'$ then (by Theorem \ref{tkkk}(2)), $$\phi(\al')(z)=\sum x_i \tr(y_iz)=\sum x_i \tr_{K/K'}(y_i \tr_{K/K'}z)=\phi^{\kkkp}(\al') (\tr_{K/K'}z)$$ indeed.

\end{proof}

\section{Main associated Galois modules calculations}\label{scalc}

This section is devoted to constructing $\go$-bases of the modules $\ga_i$.

In \S\ref{sgrad} we introduce some new and rather simple theory for constructing bases of this sort; we call the corresponding notions {\it graded-independent sets} and {\it graded bases}.

In \S\ref{scomp} we study the graded independence and the values of the functions $d$ and $\gp$ for elements of the form $\prod_{i=1}^a(\si_i-1)$ for $a\le p-1$.

In \S\ref{smain} we apply all the earlier theory to the study extensions with Galois group $(\zpz)^2$. We are able to construct a ("simple")  graded base in the case where the ramification jumps of $\kk$ are distinct modulo $p^2$; 
 see Theorem \ref{tmain}.

\subsection{On graded-independent sets and bases}\label{sgrad}

Let us give some more simple definitions related to associated Galois modules.

\begin{defi}\label{dbase}
Assume $B\subset \kg\setminus\ns$. 

1. For $i\in \z$ we set $B_i=\{f\in B,\ d(f)\equiv i\mod n\}$.

2. We will say that $B$ is  {\it graded-independent}  if for any $i\in \z$ the set $\gp(B_i)\subset \rr$ is linearly independent over $\ovk$.

3. We call 
$B$ a  {\it graded base} for $\kk$
 whenever $B$ is  graded-independent and generates $\kog$ as a $k$-module (so, $B\subset \kog$).

\end{defi}

\begin{pr}\label{pbase}

Assume $B\subset \kg\setminus\ns$ is a graded-independent set.

1.  Chose a non-zero function $c:B\to k$ and set $m=\min_b (v(c_b)+d(b))$.
 
 Then $d(\sum_{b\in B} c_b b)=m$ and $\gp(\sum_b c_b b)=\sum_{b\in B_m} r(c_b)\gp(b)$.

2. For any  
 $i\in \z$ we have $\gc_i\cap(\bigoplus_{b\in B} k\cdot b) =\bigoplus_{b\in B} \pik^{[(i-d-d(b)-1)/n]+1} b\cdot\go$.  
 
   Moreover, if $B$ contains $n$ elements and $B\subset \kog$  then $B$ is a graded base for $\kk$.

\end{pr}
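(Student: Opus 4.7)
The plan is to derive Part~2 directly from Part~1 by translating the filtration condition, so the real work is Part~1, and the main technical step is computing the leading image $p_{m+d}(c_bb)$ of each summand in the factor $\gc_{m+d}/\gc_{m+d+1}$.

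For Part~1, observe that the definition of $\gc_i$ immediately yields $d(c_bb)=v(c_b)+d(b)$ whenever $c_b\ne 0$, so every summand $c_bb$ lies in $\gc_{v(c_b)+d(b)+d}\subset\gc_{m+d}$, and hence $\sum_bc_bb\in\gc_{m+d}$. To show this is sharp I project to $\gc_{m+d}/\gc_{m+d+1}\cong \rr$ via $p_{m+d}$ (Lemma~\ref{lpi}). Summands with $v(c_b)+d(b)>m$ vanish, while for those with $v(c_b)+d(b)=m$ the constraint $c_b\in k$ forces $v(c_b)\in n\z$, so $d(b)\equiv m\pmod n$ and hence $b\in B_m$. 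To compute these contributions I pass through the $\phi$-dictionary: writing $b=\phi(\al_b)$ with $\al_b\in X_{d(b)}\setminus X_{d(b)+1}$ (Theorem~\ref{tkkk}(4)) we have $c_bb=\phi((c_b\otimes 1)\al_b)$, and Theorem~\ref{tkkk}(5) together with the multiplicativity of $\rx$ from Proposition~\ref{pkkp}(5) gives
$$p_{m+d}(c_bb)=\rx\bigl((c_b\otimes \pi^{-v(c_b)})\cdot(\al_b\cdot(1\otimes \pi^{-d(b)}))\bigr)=\rx(c_b\otimes \pi^{-v(c_b)})\cdot \gp(b).$$
Proposition~\ref{pkkp}(4) identifies $\rx(c_b\otimes \pi^{-v(c_b)})$ with $r(c_b/\pi^{v(c_b)})X^{v(c_b)}$, and since $v(c_b)\in n\z$ we have $X^{v(c_b)}=1$ in $\rr$; thus each contributing term is a nonzero $\ovk$-multiple of $\gp(b)$. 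The graded-independence of $B$, combined with the existence of at least one summand attaining the minimum, makes $\gp(\sum_bc_bb)$ nonzero and produces the displayed formula, giving $d(\sum_bc_bb)=m$.

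Part~2 is then a straightforward translation. By Part~1, $\sum_bc_bb\in\gc_i$ iff $v(c_b)+d(b)\ge i-d$ for every $b$ with $c_b\ne 0$; since $v_k(c_b)\in\z$ this becomes $v_k(c_b)\ge\lceil(i-d-d(b))/n\rceil$, and an elementary check gives $\lceil(i-d-d(b))/n\rceil=[(i-d-d(b)-1)/n]+1$, so the condition amounts to $c_b\in\pik^{[(i-d-d(b)-1)/n]+1}\go$, yielding the stated module equality. For the moreover, any nontrivial relation $\sum_bc_bb=0$ would by Part~1 force $d(0)=m<\infty$, which is absurd; hence $B$ is $k$-linearly independent, and combined with $|B|=n=\dim_k\kog$ and $B\subset\kog$ this makes $B$ a $k$-basis of $\kog$, therefore a graded base.
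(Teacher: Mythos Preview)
Your proof is correct. The overall strategy matches the paper's: establish $\sum_b c_bb\in\gc_{m+d}$, observe that terms with $b\notin B_m$ fall into $\gc_{m+d+1}$, and then invoke graded-independence to conclude $p_{m+d}(\sum_b c_bb)\ne 0$. The difference lies in how the last step is carried out. The paper simply writes ``apply Lemma~\ref{lpi}'' and takes for granted that $p_{m+d}(c_bb)$ is an $\ovk^*$-multiple of $\gp(b)$ for each $b\in B_m$ achieving the minimum; you instead route the computation through $\phi$ and the ring structure on $X_0/X_1$ (Theorem~\ref{tkkk}(4,5), Proposition~\ref{pkkp}(4,5)) to identify that scalar explicitly as $r(c_b/\pi^{v(c_b)})$. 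This is more machinery than the paper invokes, but it has the virtue of actually pinning down the scalar---and in particular confirming it lies in $\ovk^*$ rather than merely in $\rr^*$, which is exactly what the graded-independence hypothesis requires. Your Part~2 derivation coincides with the paper's, which just says ``follows from assertion~1 immediately.''
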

\begin{proof}
1. Both equalities are easy. Obviously, $d(\sum_{b\in B} c_b b)\ge m$ and   $d(\sum_{b\in B\setminus B_m} c_b b)> m$. Thus it remains to apply Lemma \ref{lpi}.

2. Follows  from assertion 1 immediately.
\end{proof}

\begin{rema}\label{rbase} Consequently, if $B$ is a graded base for $\kk$ then the restriction of the function $d$ (resp. $\gp$) to $B$ completely determines the values of these functions on $\kog\setminus \ns$. 
\end{rema}

\subsection{Simple calculations for ``short compositions'' of $(\si_i-1)$}\label{scomp}

Starting from this moment we will always assume that $n$ is a power of $p$.

First we recall a statement on ramification jumps in this case. 

\begin{lem}\label{lcongj}[\cite[Proposition IV.11]{serre}]  

All ramification jumps of $\kk$ are congruent modulo $p$.
\end{lem}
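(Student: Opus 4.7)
The plan is to invoke the Hasse--Arf theorem together with the piecewise-linear structure of the Herbrand function $\phi_{\kk}$; this is essentially the argument in \cite[Ch.~IV]{serre}. I would treat the abelian case first and then reduce to it.

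For the abelian case, let $u_1<u_2<\cdots<u_r$ be the lower ramification jumps of $\kk$. On each segment $(u_i,u_{i+1}]$ the function $\phi_{\kk}$ is linear with slope $1/[G_0:G_{u_i+1}]$. Since $n$ is a power of $p$ and $G_{u_i+1}\subsetneq G_0=G$ (here I use that the extension is totally wildly ramified, so $G_0=G_1=G$), this slope equals $1/p^{m_i}$ for some $m_i\ge 1$. The Hasse--Arf theorem now asserts that the upper-numbering breaks $\phi_{\kk}(u_i)$ are all integers, so
\[
\phi_{\kk}(u_{i+1}) - \phi_{\kk}(u_i) = \frac{u_{i+1}-u_i}{p^{m_i}} \in \z_{>0}.
\]
This forces $p^{m_i}\mid (u_{i+1}-u_i)$, and in particular $p\mid(u_{i+1}-u_i)$; by induction on $i$, all jumps $u_i$ are congruent to $u_1$ modulo $p$.

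For the general (possibly non-abelian) $p$-group $G$, I would reduce to the abelian case by induction on $|G|$. A non-trivial $p$-group has non-trivial centre, so I would pick a central subgroup $N\triangleleft G$ of order $p$ and form the tower $k\subset K^N\subset K$, with $\gal(K^N/k)\cong G/N$ a smaller $p$-group and $\gal(K/K^N)\cong N\cong\z/p\z$. By the induction hypothesis, the jumps of $K^N/k$ are mutually congruent modulo $p$, while the extension $K/K^N$ has a single (hence trivially self-congruent) jump. Herbrand's theorem (\cite[Ch.~IV, \S3, Prop.~15]{serre}) expresses the upper-numbering filtration of $G$ in terms of those of $G/N$ and $N$; translating back to lower numbering via $\phi$ yields the desired congruence for all jumps of $\kk$. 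The main obstacle is the bookkeeping in this non-abelian reduction, since the lower-numbering filtration does not pass cleanly to subgroups. For the applications in this paper (where $G\cong(\z/p\z)^2$ is already abelian) the first step alone suffices, being an immediate consequence of Hasse--Arf.
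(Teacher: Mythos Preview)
The paper does not prove this lemma; it merely cites \cite[Proposition~IV.11]{serre}. Your argument for the abelian case via Hasse--Arf is correct and standard: since $G_0=G_1=G$ is a $p$-group, each slope of $\phi_{\kk}$ beyond the first break equals $1/p^{m_i}$ with $m_i\ge 1$, and integrality of the upper breaks forces $p\mid(u_{i+1}-u_i)$. As you observe, this already covers the only case the paper actually uses ($G\cong(\zpz)^2$).

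Your non-abelian reduction, however, has a genuine gap rather than mere bookkeeping. First a slip: the lower-numbering filtration \emph{does} pass to subgroups ($H_i=H\cap G_i$); it is \emph{quotients} that require the Herbrand reparametrisation. More importantly, the induction you propose does not close. Knowing that the lower breaks of $K^N/k$ are mutually congruent modulo $p$, together with the single break of $K/K^N$, yields no relation between these two sets of numbers; already for $G=\z/p^2\z$ with $N=pG$ each subextension has a single break, so the inductive hypothesis is vacuous, yet the conclusion $b_1\equiv b_2\pmod p$ is precisely what must be shown. A reduction that does work exploits subgroups rather than quotients: for consecutive breaks $b_i<b_{i+1}$, pass to the subquotient $G_{b_i}/G_{b_{i+1}+1}$, which has exactly the two breaks $b_i,b_{i+1}$; the commutator estimate $[G_a,G_b]\subset G_{a+b+1}$ then makes the second ramification group central there, so one can exhibit an abelian subgroup $A$ whose breaks (read off via $A_j=A\cap G_j$) are $b_i$ and $b_{i+1}$, and apply Hasse--Arf to $K/K^A$.
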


So we set $\ovh,\ 0\le \ovh<p,$ to be the common residue of the ramification jumps of $\kk$ modulo $p$.

\begin{theo}\label{tcomp}

Let $\si_i,\ 1\le i \le a$, belong to $G$ (we do not assume $\si_i$ to be distinct). We will write $h(\si_i)$ for the ramification jumps corresponding to $\si_i$, $\sum =\sum_{i=1}^ah(\si_i)$, $\prod=\prod_{i=1}^a(\si_i-1)\in \kog$.

Then the following statements are valid.
 
1. $\prod\in \ga_{\sum}$ and $p_{\sum}(\prod)\sim \sum_{j=0}^{n-1} (\prod_{l=1}^{a} (j-l\ovh) 
)  X^j$ (see (\ref{epi})). 

2. Assume in addition that $a<p$. Then $(X-1)^{n-a-1}\mid   p_{\sum}(\prod)$ and  $(X-1)^{n-a}\nmid   p_{\sum}(\prod)$; hence $d(\prod)=\sum -d$.

Moreover, if $\ovh\neq 0$ then   $p_{\sum}(\prod)\sim (X^{\ovh}-1)^{n-a-1}$.

3. 
Assume that  $p^a\le n$, $\ovh\neq 0$, 
and for any $i,\  1\le i \le a-1$ we have $h(\si_{i+1})-h(\si_i)=p^is_i$, where $s_i\in \z\setminus p\z$. 
Set $B$ to be the set of   $\prod (\si_i-1)^{n_i}$ for 
 $(n_i)$ running through all sets of non-negative integers such that $\sum n_i<p$.

Then for any $s\in \z$   the corresponding set $B_s$ consists of at most one element. 
Consequently, 
 $B$ is graded-indepent. 
\end{theo}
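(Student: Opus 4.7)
The key computation is the effect of $\prod$ on $\pi^m$ modulo $\gm^{m+\sum+1}$, which I would carry out by passing to the associated graded ring $\mathrm{gr}(\gok) = \bigoplus_j \gm^j/\gm^{j+1}$. Each operator $\si_i - 1$ raises valuation by $h(\si_i)$ and induces an $\ovk$-linear map $D_i \colon \gm^j/\gm^{j+1} \to \gm^{j + h(\si_i)}/\gm^{j+h(\si_i)+1}$; the cross term $(\si_i - 1)(x)(\si_i - 1)(y)$ has valuation $v(xy) + 2h(\si_i) > v(xy) + h(\si_i)$, so $D_i$ is a derivation on $\mathrm{gr}(\gok)$. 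Writing $u_i = \si_i(\pi)/\pi - 1 \equiv c_i \pi^{h(\si_i)} \pmod{\gm^{h(\si_i)+1}}$ with $c_i \in \ovk^*$, Leibniz gives $D_i(\pi^j) = j c_i \pi^{j+h(\si_i)}$, and a straightforward induction on the composition yields
\[
\prod(\pi^m) \equiv c_1 \cdots c_a \cdot m(m + h(\si_1)) \cdots (m + h(\si_1) + \cdots + h(\si_{a-1})) \cdot \pi^{m+\sum} \pmod{\gm^{m+\sum+1}}.
\]
Using $h(\si_i) \equiv \ovh \pmod p$ and $\sum \equiv a\ovh \pmod p$, the integer factor reduces in $\ovk$ to $\prod_{l=1}^a(j - l\ovh)$ after the substitution $m = j - \sum$; combined with definition \eqref{epi}, this yields part 1.

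For part 2, I would recognize $f(X) := \sum_{j=0}^{n-1} \prod_{l=1}^a(j - l\ovh) X^j$ as $g(D)\bigl((X-1)^{n-1}\bigr)$ in $\rr$, where $g(T) = \prod_l(T - l\ovh)$ and $D = X \frac{d}{dX}$, using the identity $\sum_{j=0}^{n-1} X^j = (X-1)^{n-1}$ in $\rr = \ovk[X]/(X-1)^n$ (valid since $n$ is a power of $p$). An induction using $n \equiv 0 \pmod p$ shows $D^i((X-1)^{n-1}) \equiv (-1)^i i!(X-1)^{n-1-i} \pmod{(X-1)^{n-i}}$ for $0 \le i < p$, and since $\deg g = a < p$ has leading coefficient $1$, one obtains $f(X) \equiv (-1)^a a!(X-1)^{n-a-1} \pmod{(X-1)^{n-a}}$. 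This gives $(X-1)^{n-a-1} \mid p_{\sum}(\prod)$, $(X-1)^{n-a} \nmid p_{\sum}(\prod)$, and $d(\prod) = \sum - d$. For the ``moreover'' claim, the expansion $X^{\ovh} - 1 \equiv \ovh (X-1) \pmod{(X-1)^2}$ (valid since $\ovh \ne 0$ in $\ovk$) gives $(X^{\ovh}-1)^{n-a-1}$ the same leading behavior $\ovh^{n-a-1}(X-1)^{n-a-1}$ as $p_{\sum}(\prod)$, from which the $\sim$ relation follows.

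For part 3, by part 2 we have $d\bigl(\prod_i (\si_i - 1)^{n_i}\bigr) = \sum_i n_i h(\si_i) - d$ whenever $\sum n_i < p$, so it suffices to show the map $(n_i) \mapsto \sum_i n_i h(\si_i) \bmod n$ is injective on such tuples. Writing $h(\si_i) = h(\si_1) + \sum_{j<i} p^j s_j$ and setting $N_i = n_i + n_{i+1} + \cdots + n_a$, we get
\[
\sum_i n_i h(\si_i) = N_1 h(\si_1) + \sum_{i=1}^{a-1} N_{i+1} p^i s_i,
\]
with $0 \le N_a \le \cdots \le N_1 < p$. Reducing successively modulo $p, p^2, \ldots, p^a$ and using $\ovh \ne 0$ together with $s_i \notin p\z$, one recovers $N_1, N_2, \ldots, N_a$ (hence $(n_i)$) uniquely; the assumption $p^a \le n$ ensures these classes remain distinct modulo $n$. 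Thus each $B_s$ contains at most one element, and graded-independence follows automatically since any nonzero singleton image under $\gp$ is $\ovk$-linearly independent.

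The main obstacle I anticipate is setting up the graded derivation $D_i$ correctly and tracking the inductive composition in part 1 (particularly ensuring that the reduction $h(\si_i) \equiv \ovh$ is applied at the right stage to produce the claimed polynomial); once part 1 is established, parts 2 and 3 reduce to polynomial calculus in $\ovk[X]/(X-1)^n$ and a base-$p$ uniqueness argument, respectively.
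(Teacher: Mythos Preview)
Your treatment of parts 1 and 3 is correct and coincides with the paper's. For part 1 the paper also computes $(\si_i-1)(\pi^j)$ to leading order and composes; for part 3 the paper likewise rewrites $\sum_i n_i h(\si_i)$ as $q_1 h(\si_1)+\sum_{j\ge 1} q_{j+1}p^j s_j$ with $q_l=\sum_{i\ge l}n_i$ and recovers the $q_l$ one $p$-adic digit at a time, using $\ovh\neq 0$, $p\nmid s_j$, and $p^a\mid n$ exactly as you do.

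For the divisibility half of part 2 you take a genuinely different route. The paper observes that in $\rr$ multiplication by $(X-1)$ sends $\sum_j g(j)X^j$ to $\sum_j\bigl(g(j-1)-g(j)\bigr)X^j$; since $g(j)=\prod_l(j-l\ovh)$ has degree $a<p$ in $j$, its $a$th finite difference is a nonzero constant and its $(a{+}1)$st vanishes, giving exactly $(X-1)^{n-a-1}\mid p_\Sigma(\prod)$ and $(X-1)^{n-a}\nmid p_\Sigma(\prod)$. Your argument via $D=X\,d/dX$ and $\sum_j X^j=(X-1)^{n-1}$ is equally valid and reads off the $(X-1)$-adic order more directly; the paper's finite-difference trick is a touch more elementary. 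Either way the conclusion $d(\prod)=\Sigma-d$ follows.

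There is, however, a real gap in your argument for the ``moreover'' clause. Knowing that $p_\Sigma(\prod)$ and $(X^{\ovh}-1)^{n-a-1}$ have the same leading term in the $(X-1)$-adic filtration shows only that they are associates in the local ring $\rr$; it does \emph{not} yield $\ovk^*$-proportionality, which is what $\sim$ means. Concretely, take $p=n=3$, $\ovh=1$, $a=1$: then $\sum_{j=0}^{2}(j-1)X^j=X^2-1=(X-1)(X+1)$, while $(X^{\ovh}-1)^{n-a-1}=X-1$; these differ by the unit $X+1\in\rr^*$ but are not scalar multiples. So ``same leading behaviour'' is not enough, and indeed this example indicates that the clause as literally stated is too strong --- fortunately only the divisibility information from part 2, which you do establish, is used in the rest of the paper.
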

\begin{proof}
1. Assume $(\si_i-1)(\pi)=c_i\pi^{h(\si_i)+1}\eps_i$ for some $c_i\in \go^*$ and $\eps_i\in 1+\gm$. Then for any $j\in \z$ we obviously have $(\si_i-1)(\pi^j)=c_i^j\pi^{h(\si_i)+j}\eps_{ij}$ for some  $\eps_{ij}\in 1+\gm$; recall here that $h(\si_i)>0$ since $\kk$ is wildly ramified.
 Combining this statements for all powers of $\pi$ and all $\si_i$ we easily obtain $p_{\sum}(\prod)=r(\prod c_i) \sum_{j=1}^{a} (j-l\ovh)  X^j$.  Since $r(\prod c_i)\neq 0$, we obtain the result. 

2. 
Since $X^n-1=(X-1)^n$ in $\ovk[X]$,  for $b\ge 0$ we have $(X-1)^{n-b}\mid p_{\sum}(\prod)$ if and only if $(X-1)^b p_{\sum}(\prod)=0$. 
Now, the previous assumption implies that the  coefficient of  $p_{\sum}(\prod)$ at $X^j$ is a non-zero polynomial in $r(j)$ of degree $\sum<p$. Next, 
in $R$ we have $(X-1)(\sum_{i=0}^{n-1}g(i)X^i)=\sum_{i=0}^{n-1}(g(i-1)-g(i)) X^i$. Hence the well-known formula for the $s$th difference of a polynomial   of degree $s$ gives the divisibility statements in question.

The ``moreover'' statement is just a simple calculation of coefficients. 

3. According to Proposition \ref{pbase}, if all the sets $B_s$ consist of at  most one element then $B$ is graded-independent indeed.

 Thus for two sets of non-negative integers $(n_i)$ and $(n_i')$ whose sums do not exceed $p-1$ it suffices to verify that $\sum n_ih(\si_i)\equiv \sum n'_ih(\si_i)\mod n$ implies  $(n_i)=(n_i')$ . 

Now, for any numbers $o_i$ we have 
 $\sum_{i=1}^a  o_ih(\si_i)=
q_1h(\si_1)+ \sum_{j=1}^{a-1}q_{j+1}p^{j}s_j$, where $q_l=\sum_{r=l}^ao_i$ for any $l,\ 1\le l\le a$. Now we take $o_i=n'_i-n_i$; then 
for any $j\ge 0$ clearly 
 $q_j$ is an integer that is zero if it is divisible by $p$. Applying obvious induction we obtain that all $q_j$ vanish; hence  $(n_i)=(n_i')$ and we obtain a contradiction.
\end{proof}

\begin{rema}\label{rmax}
The case $\ovh= 0$ is rather ``rare''.  This can only happen if $\cha K=0$  and $G$ is cyclic; see Proposition III.2.3 of \cite{fev} and  Exercises IV.2.3(c,f) of \cite{serre}.\footnote{Here we also use the well-known fact that there exists a degree $p$ subextension in $\kk$ whose ramification jump equals the smallest ramification jump of $\kk$.} 
Moreover (see Proposition III.2.3 of \cite{fev} once again), if $G=\zpz$ and   $\ovh= 0$ then for any other Galois extension $K'/k$ with Galois group $ \zpz$ the ramification jump in it is at most $p$.



\end{rema}




\subsection
{Calculations in the case $G=(\zpz)^2$} 
\label{smain}

Now we pass to 
 to more specific statements. We assume that $K=K_1K_2$, where $K_i/k$ are degree $p$ extensions whose ramification jumps are $h_2>h_1>0$.  Thus $n=p^2$, $G=(\zpz)^2$, and  simple ramification theory calculations give the following statement.

\begin{pr}\label{pram}
Set $\si_1$ (resp. $\si_2$) to be a non-trivial element of $G$ whose restriction to $K_2$ (resp. $K_1$) is trivial. 

1. Then $p\nmid h_1$.

2.  The ramification jumps corresponding to $\si_1$ and $\si_2$ equal
$h_1$ and $\wh=ph_2-(p-1)h_1$, respectively. Moreover, these ramification jumps are prime to $p$ and  $h_1\not \equiv h_2\mod p$ if and only if $h_1\not \equiv \wh\mod p^2$.

3. $d=(p-1)(ph_2+h_1)$, and the ramification depths of $K_1/k$ and $K_2/k$ equal $(p-1)h_1$ and $(p-1)h_2$, respectively.

\end{pr}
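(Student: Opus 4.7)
The plan is to deduce all three assertions from the ramification filtration of $G$ in lower numbering, via standard Herbrand-theoretic manipulations applied to the intermediate subfields $K_1$ and $K_2$.

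First I will observe that the hypothesis $h_1 < h_2$ forces $G$ to have two distinct lower-ramification jumps $u_1 < u_2$, with $G_{u_1} = G$ and $G_{u_2} = H$ a subgroup of order $p$; indeed, a single jump would yield a common upper jump for both quotients $\gal(K_i/k)$, contradicting $h_1 \neq h_2$. Consequently the jump function $h$ of \S\ref{sbd} takes the value $u_2$ on $H \setminus \{1\}$ and the value $u_1$ on $G \setminus H$. The Herbrand function $\varphi_{\kk}$ then satisfies $\varphi_{\kk}(u) = u$ for $u \leq u_1$ and $\varphi_{\kk}(u) = u_1 + (u-u_1)/p$ for $u_1 \leq u \leq u_2$, and since upper numbering passes to quotients, the unique upper jump of $\gal(K_1/k) = G/H$ is $u_1$ while that of $\gal(K_2/k) = G/\langle\sigma_1\rangle$ is $\varphi_{\kk}(u_2) = u_1 + (u_2-u_1)/p$. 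Here one uses that the subgroup $\langle \sigma_2 \rangle = \gal(K/K_1)$ coincides with $H$, because its fixed field $K_1$ is the one with the smaller jump $h_1$. Solving $h_1 = u_1$ and $h_2 = u_1 + (u_2-u_1)/p$ gives $h(\sigma_1) = h_1$ and $h(\sigma_2) = u_2 = ph_2 - (p-1)h_1 = \wh$, which is most of assertion 2.

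Assertion 1 then follows immediately from Remark \ref{rmax}: since $G \cong (\zpz)^2$ is not cyclic, the common residue $\ovh$ of the jumps modulo $p$ is nonzero, which is exactly $p \nmid h_1$. This in turn forces $\wh \equiv h_1 \pmod p$ to be prime to $p$, and the final equivalence in assertion 2 is immediate from $\wh - h_1 = p(h_2 - h_1)$.

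Assertion 3 is obtained by applying the different formula $v(\gd^{\kk}) = \sum_{i \geq 0}(|G_i|-1)$ to the filtration just pinned down, namely $G_i = G$ for $0 \leq i \leq h_1$ and $G_i = H$ for $h_1 < i \leq \wh$. This produces $v(\gd^{\kk}) = (h_1+1)(p^2-1) + (\wh - h_1)(p-1)$, which after substituting $\wh$ simplifies to $(p-1)(h_1 + ph_2 + p + 1)$; subtracting $n-1 = p^2-1$ then yields $d = (p-1)(ph_2 + h_1)$. The analogous (and easier) computation for each cyclic degree-$p$ extension $K_i/k$ with unique jump $h_i$ gives ramification depth $(p-1)h_i$. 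The only mildly delicate point throughout is the identification $\langle\sigma_2\rangle = G_{u_2}$, matching the smaller upper jump of $G$ with the smaller of the two ramification jumps $h_1 < h_2$; all remaining steps are routine unpacking of lower/upper numbering and of the different formula.
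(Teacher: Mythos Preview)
Your proof is correct and follows essentially the same route as the paper: both invoke Remark \ref{rmax} for assertion 1, compute the jumps $h_1$ and $\wh$ via standard Herbrand theory (the paper delegates this to \cite[Exercise III.3.2b]{fev} while you carry it out explicitly), and obtain assertion 3 from the different formula of \cite[\S IV.1, Proposition 4]{serre}. Your version is somewhat more self-contained, in particular in spelling out why $G_{u_2}=\langle\sigma_2\rangle$, but there is no genuine difference in method.
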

\begin{proof}
1. 
  $h_1$ is prime to $p$ since $h_1<h_2$; see Remark \ref{rmax}.

2. The calculation of jumps in this case is really easy; see Exercise III.3.2b of \cite{fev}. 
It immediately yields the equivalence in questions.
 
 3.  We apply  the (definition of  the depth of ramification along with the)  well-known formula for the different given by Proposition 4 in \cite[\S IV.1]{serre}. 
  We immediately obtain the values of ramification depths of $K_1/k$ and $K_2/k$, and it remains to note that 
  $d=(p-1)(ph_2-(p-1)h_1) +(p^2-p)h_1 $. 
\end{proof}

Now we 
pass to the main specific theorem of this paper. We set $a=i+j$ (for  integer $ i,j,\ 0\le i,j \le p-1$) and define  the following (piecewise linear) function: 
  we set $H(i,j)=h_1i+\wh j-d$ whenever $a< p-1$ and $H(i,j)=(pi-(p-1)^2)h_1+ph_2j$ 
    otherwise. 
  
  We also define the following $P(i,j)\in \rr$ 
  for (integer) $i,j\ge 0$: we set 
  $P(i,j)=
  (X^{h_1}-1)^{n-a-1}$ if $a< p-1$.
   Next, for $i+j\ge p-1$ we set  $P(i,j)=(\sum_{s=0}^{p-1} (\prod_{l=1}^{i} (s-lh_1))X^{ps}) (\sum_{t=0}^{p-1} (\prod_{l=1}^{j} (t-lh_2))X^{pt})$. 

\begin{theo}\label{tmain}
Adopt the notation and assumptions of the previous proposition. For $ 0\le i,j\le p-1$ we set $f_{ij}=(\si_1-1)^i(\si_2-1)^j\in k[G]$, $a=i+j$.

1. Then 
for $ 0\le i,j\le p-1$ we have  $d(f_{ij})=H(i,j)$ and $\gp(f_{ij})\sim P(i,j)$.

Consequently, 
 $p\mid d(f_{ij})$ if and only if $a\ge p-1$.

2. Moreover, if $p\nmid h_2-h_1$ then 
 $B=\{f_{ij} : \ 0\le i,j\le p-1\} $ 
  is a graded base of $\kk$. 

\end{theo}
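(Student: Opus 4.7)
My plan proceeds in two stages corresponding to the two parts of the theorem.

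\textbf{Part 1.} First I split on $a = i + j$. When $a < p$, the product $f_{ij} = (\sigma_1-1)^i(\sigma_2-1)^j$ falls within the scope of Theorem \ref{tcomp}(2) with $\sigma_1$ appearing $i$ times and $\sigma_2$ appearing $j$ times, yielding $d(f_{ij}) = ih_1 + j\wh - d$ and $\gp(f_{ij}) \sim (X^{\ovh}-1)^{n-a-1}$. Since $h_1 \equiv \ovh \pmod{p}$ with $p \nmid h_1$, and since $a \le p-1$ forces $(X-1)^{n-a-1+p}=0$ in $\rr$, the elements $(X^{\ovh}-1)^{n-a-1}$ and $(X^{h_1}-1)^{n-a-1}$ in fact coincide up to a scalar, matching the stated $P(i,j)$ for $a < p - 1$. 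For the boundary case $a = p - 1$ I would verify separately that the ``otherwise'' formulas for $H$ and $P$ reproduce the outputs of Theorem \ref{tcomp}(2) (up to scalars in the case of $P$).

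The main obstacle is the regime $a \geq p$: here $\prod_{l=1}^p(j - l\ovh) = j^p - j$ vanishes identically on $\ff_p$, so $p_{\sum}(f_{ij}) = 0$ by Theorem \ref{tcomp}(1) and $f_{ij}$ lies strictly deeper than $\gc_{\sum}$. My approach is to transfer the calculation via the isomorphism $\phi: \kkk \to \kg$ of Theorem \ref{tkkk} to the tensor algebra, where Proposition \ref{pkkp} gives a concrete description of the filtration $X_i^{\kk}$ and the isomorphism $\rx$ on successive quotients; this framework was developed in \S\ref{skkk} precisely for such calculations. Concretely, one expands $f_{ij}(\pi^m)$ via $\sigma_l(\pi^m) = \pi^m(1 + c_l\pi^{h(\sigma_l)}\epsilon_l)^m$ and tracks the binomial coefficients $\binom{m}{k}$ that become divisible by $p$, using $i, j \leq p - 1$ to keep the expansion controlled, then reads off the leading term in $X_i^{\kk}$ at the appropriate level to extract both $d(f_{ij})$ and $\gp(f_{ij})$.

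\textbf{Part 2.} Since $|B| = n = p^2$ and $B \subset \kog$, Proposition \ref{pbase}(2) reduces the claim to graded-independence. Using Part 1, $d(f_{ij}) \equiv (a+1)h_1 \pmod{p}$ for $a < p - 1$, running through the $p - 1$ nonzero residues mod $p$, while $d(f_{ij}) \equiv 0 \pmod{p}$ for $a \geq p - 1$; hence no class $B_m$ mixes the two regimes. In the $a < p - 1$ regime, the hypothesis $p \nmid h_2 - h_1$ (equivalently, $\wh \not\equiv h_1 \pmod{p^2}$, since $\wh - h_1 = p(h_2 - h_1)$) makes all values $ih_1 + j\wh \pmod{n}$ distinct within this range, so each relevant $B_m$ is a singleton and graded-independence is immediate. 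In the $a \geq p - 1$ regime coincidences can and do occur, but the product form of $P(i, j)$ as a polynomial in $X^p$ reduces $\ovk$-linear independence within any such $B_m$ to a concrete one-variable statement over $\ovk$, which is ensured by the nondegeneracy coming from $p \nmid h_2 - h_1$.
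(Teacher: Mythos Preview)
Your Part 1 has a genuine gap in the regime $a\ge p-1$. You correctly observe that Theorem \ref{tcomp}(1) gives $p_{\Sigma}(f_{ij})=0$ once $a\ge p$, so a new idea is needed; but the direct binomial expansion of $f_{ij}(\pi^m)$ that you propose is not that idea. The paper instead uses the structural decomposition
\[
f_{ij}=(\osi_1-1)^i\circ\tr_{K/K_1}\;*\;(\osi_2-1)^j\circ\tr_{K/K_2},
\]
where $*$ is the coefficientwise product of Theorem \ref{tkkk}(3). Via $\phi$ this corresponds to an ordinary product in $\kkk$, and Proposition \ref{pkkk}(1,3) then transports each factor from the degree-$p$ subextension $K_s/k$ (where $i,j\le p-1$ so Theorem \ref{tcomp}(2) applies directly) up to $\kk$. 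Multiplying in $X_0/X_1\cong\rr$ gives $H(i,j)$ and $P(i,j)$ immediately as the product of the two degree-$p$ answers evaluated at $X^p$. Your brute-force expansion would have to rediscover this product structure by hand, tracking cancellations among binomial coefficients through two layers of $(1+\cdots)^m$; nothing in \S\ref{skkk} is set up to support that computation, and you give no indication of how the claimed $H$ and $P$ would emerge from it.

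Your Part 2 is closer to the paper but still incomplete in the $p\mid s$ case. The key point, which you do not state, is that if $f_{i_1j_1},f_{i_2j_2}\in B_s$ with $a_1,a_2\ge p-1$, then the congruence $H(i_1,j_1)\equiv H(i_2,j_2)\pmod n$ together with $p\nmid h_1$ and $p\nmid h_2-h_1$ forces $a_1\ne a_2$ whenever $(i_1,j_1)\ne(i_2,j_2)$. Once you have this, the factor $\sum_s(\prod_{l=1}^i(s-lh_1))X^{ps}$ in $P(i,j)$ is divisible by exactly $(X^p-1)^{p-1-i}$ (and similarly for the second factor), so distinct $a$ values give distinct exact powers of $X^p-1$ dividing $P(i,j)$, and $\ovk$-independence follows. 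Your phrase ``reduces to a concrete one-variable statement ensured by the nondegeneracy'' does not contain this argument.
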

\begin{proof}
1. We start from the study of $d(f_{ij})$ and $\gp(f_{ij})$.

In the case $a<p-1$ the corresponding statements are given by Theorem \ref{tcomp}(2).

Now assume $a\ge p-1$. This calculation is the main application of the multiplication $*$ (and of the other statements related to $\kkk$) in our paper.
We note that  $f_{ij}=(\osi_1-1)^i\circ \tr_1*(\osi_2-1)^j\circ \tr_2$, where $\osi_1$ (resp. $\osi_2$) is the restriction of $\si_1$ (resp. $\si_2$) to $K_1$ (resp. $K_2$), $tr_{1,2}$ are the trace operators from $K$ into $K_1$ and $K_2$, respectively, and $*$ is the coefficientwise multiplication on $K[G]$ introduced in Theorem \ref{tkkk}(3).
Next,  Theorem \ref{tcomp}(2) allows us to make the corresponding computations in $K_1/k$ and $K_2/k$ easily. We obtain
$d^{K_1/k}(\osi_1-1)^i=(i-p+1)h_1$ (see Proposition \ref{pram}(3)),  $\gp^{K_1/k}(\osi_1-1)^i\sim \sum_{s=0}^{p-1} (\prod_{l=1}^{i} (s-lh_1))X^{s}$, $d^{K_2/k}(\osi_2-1)^j=(j-p+1)h_2$, $\gp^{K_2/k}(\osi_2-1)^j\sim \sum_{s=0}^{p-1} (\prod_{l=1}^{j} (s-lh_2))X^{s}$. Hence Theorem \ref{tkkk}(3,4) along with Proposition \ref{pkkk}(1,3) imply that $f_{ij}\in \ga_{H(i,j)}$ and $p_{H(i,j)+d}(f_{ij})\sim P(i,j)$.
$\;$It remains to note that $ P(i,j)\neq 0$ (in $R$) since it is not divisible by $(X-1)^n$; see  Theorem \ref{tcomp}(2).

Lastly, if $a<p-1$ then $H(i,j)=h_1i+\wh j-d\equiv h_1a-d\equiv h_1(p-1-d)\not \equiv 0\mod p$; see Proposition \ref{pram}.   It remains to note that $p$ obviously divides $  H(i,j)$ if $a\ge p-1$.

2. Firstly, $f_{ij}$ obviously give a $k$-base of $\kog$. So, it remains to verify that for any $s\in \z$  the set $\gp(B_s)\subset \rr$ is linearly independent over $\ovk$, where $B_s=\{f\in B : \ d(f)\equiv s\mod n\}$.

We consider two cases.

First assume $p\nmid s$. By assertion 1 the corresponding $B_s$ is contained inside the set  $B=\{f_{ij} : \ 0\le i,j,\ i+j<p-1\} $; hence 
 the set $B_s$ consists of at most one element according to
Theorem \ref{tcomp}(3).

Now we pass to the case $p\mid s$. The argument is similar to the proof of Theorem \ref{tcomp}(3). According to assertion 1, $B_s= \{f_{ij} : \ 0\le i,j\le p-1,\ i+j\ge p-1,\ (pi-(p-1)^2)h_1+ph_2j\equiv s\mod n\} $. 
Hence if $(i_1,j_1)$ and $(i_2,j_2)$ are distinct elements of $B_s$ then $s-p(h_2 -h_1)j_1\not \equiv s-p(h_2 -h_1)j_2\mod n$. Consequently, $ph_1(i_1+j_1)\not \equiv ph_1(i_2+j_2)\mod s$. Since $p\nmid h_1$, it follows that for distinct elements of $B_s$ the corresponding values of $i+j$ are distinct; hence $P(i,j)$ are divisible by distinct powers of $X^p-1$ (see assertion 1). This immediately implies the $\ovk$-independence in question.

\end{proof}

\begin{rema}\label{rmain}
1. It is easily seen that the argument used for the computing of $d(f_{ij})$ and $\gp(f_{ij})$ for $i+j\ge p-1$ can be vastly generalized. Indeed, assume that totally ramified field extensions $K_s/k,\ 1\le s\le m$ (for $m>1$), are linearly disjoint, that is, $K= K_1\otimes_k K_2\otimes_k \dots \otimes_k K_m$ is a field (and their composite).
Then the same argument as above implies for any $\al_s\in X_{c_s}^{K_s\otimes_k K_s}$ we have $\prod \al_s\in X_{c}$ and $\rx_{c}(\prod \al_s)=g$, where $c=n\sum c_s/n_s$, $n_s=[K_s:k]$, and $g=\prod r_{c_s}^{K_s\otimes_k K_s}(\al_s)(X^{n/n_s})$. Moreover, one can easily express $\phi(\prod \al_s)$ in terms of $\phi^{K_s/k}(\al_s)$.

2. The main question here is whether this $g$ is not zero. Moreover, one would certainly like a large collection of elements of this sort to be graded-independent and their images with respect to $\phi$ to belong to $k[G]$. We note that $n/n_s\mid d^{\kk}(\phi(\al_s))$ and $\gp^{\kk}(\phi(\al_s))$ is a polynomial in $X^{n/n_s}$.
Thus, if $n_s$ are powers of $p$ then one doesn't have much chance to obtain a big graded-independent set if $m>2$.

Now let us describe a setting 
 generalizing  that considered in Theorem \ref{tmain}(2). We assume $m=2$,  $n_1\ge n_2$, $\al_s\in X_{c_s}^{K_s\otimes_k K_s}$ for $c_1\not\equiv c_2\mod p$ and both $\prod r_{c_s}^{K_s\otimes_k K_s}(\al_s)$ are divisible by $(X-1)$ but not divisible by $(X-1)^2$. Then a straightforward generalization of the arguments used in the proof of Theorem \ref{tmain}(2) yields that  the set $\phi(\al_1^{i}\cdot \al_2^{j})$ is graded-independent if $(i,j)$ runs through non-negative integers such that $n_2i+n_1j< n$. This set consists of  $(n+n_1)/2$ elements. Unfortunately, the authors have no idea how to complete these elements to a graded-independent base (unless $n_1=p$).
 
 Recall that 
  a vast class of extensions $K_s/k$ that contain $\al_s$ satisfying the properties in question 
   was introduced in \cite[\S3]{bgm3}; these extensions were called {\it semistable} extensions. Moreover, Theorem 3.5 and Proposition 3.4.1 of ibid. 
    can be used to construct semistable extensions explicitly (for $p\nmid c_s$)
 
 Alternatively, if $\cha k=0$, $k$ contains 
 all roots of $1$ of degree $p^r$ ($r>0$), and $K_1=K(\pi_1)$, where $\pi_1= \sqrt[p^r]{\pik}$ (cf. Remark \ref{rmax}) then for $\al^1=\pi_1\otimes \pi_1^{-1}-1$ one can easily check that $\phi^{K_1/k}(\al^1)$ belongs to $k[G]$ as well, $\al^1\in  X_{0}^{K_1\otimes_k K_1}$, and $r_0^{K_1\otimes_k K_1}(\al^1)=X-1$. 

 3. Note that the elements $\phi(\al_1^{i}\cdot \al_2^{j})$ can be expressed as products of $\phi(\al_1)$ and  $\phi(\al_2)$ with respect to the multiplication $*$; see Theorem \ref{tkkk}(3). It appears that one can replace the elements $f_{ij}$ in Theorem \ref{tmain} by elements that can be computed similarly (for certain $\phi(\al_1)$ and $\phi(\al_2)$ of a rather simple form). Yet we do not use bases of this form in this paper since 
  they are not compatible with (the general) Theorem \ref{tcomp}.
  
\end{rema}






\section{
 Some more associated modules and orders}\label{smore}
 
 Throughout this section we will assume that $k_0\subset k$,  $k/k_0$ is a totally ramified extension of complete discrete valuation fields (yet cf. \ref{rgen} below).
 
 In \S\ref{srel} we develop the theory of the modules  $\gaz_i=\ga_i\cap \kzg$. This enables us to prove an analogue of Theorem \ref{tmain}(2) in the case where the two ramification jumps are distinct but congruent modulo $p$.
 
 In \S\ref{ssmore} we consider various associated orders and some more associated Galois modules (we define $\gaz(i,j)=\{f\in \kz[G]:\ f(\gm^i)\subset \gm^j \}$). We define and study those graded bases that are convenient for constructing bases of modules of this type; we call them {\it diagonal bases}. Moreover, we prove that a graded base becomes a diagonal base if we lift the original extension by a tamely ramified extension of degree that is at least $n-1$.

 \subsection{On relative associated Galois modules}\label{srel}

  We set $e_0=[K:k_0]$, $\goz=\go\cap \kz$ is the ring of integers of $\kz$, $\piz\in \goz$ is an uniformizing element.

 Now we define associated Galois modules inside $\kzg$. 


\begin{defi}\label{dbaser}
Assume $i\in\z$, $B\subset \kg\setminus\ns$. 

1. We set $\gaz_i=\gc_i\cap \kzg$.

2. For $i\in \z$ we set $B_i^0=B^i_0(K/k_0)=\{f\in B : \ d(f)\equiv i\mod e_0\}$.

3. We will say that $B$ is  {\it $\kz$-graded independent}  if for any $s\in \z$ the set $\gp(B_s^0)\subset \rr$ is linearly independent over $\ovk$.

4. We call 
$B$ a  {\it graded base} for $(\kk,\kz)$
 whenever $B$ is  $\kz$-graded independent and generates $\kzg$ as a $k_0$-module (so, $B\subset \kzg$).

\end{defi}

\begin{pr}\label{pbaser}

I. Assume $B\subset \kg\setminus\ns$ is a $\kz$-graded independent set.

1.  Chose a non-zero function $c:B\to \kz$ and set $m=\min_b (v(c_b)+d(b))$.
 
 Then $d(\sum_{b\in B} c_b b)=m$ and $\gp(\sum_b c_b b)=\sum_{b\in B_m} r(c_b)\gp(b)$.

2. For any  
 $i\in \z$ we have $\gc_i\cap(\bigoplus_{b\in B} \kz\cdot b) =\bigoplus_{b\in B} \piz^{[(i-d-d(b)-1)/n]+1} b\cdot\goz$.    

 II. Assume that $L$ is a $\kz$-vector subspace of $\kg$. 
 
 Then there exists a $\kz$-graded independent set $B\subset L$ such that $L=\bigoplus_{b\in B_m} \kz\cdot b $.

\end{pr}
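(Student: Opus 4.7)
My plan is to proceed in parallel with the proof of Proposition \ref{pbase}, the key new ingredient being that coefficients now lie in $\kz$ and so their valuations are restricted to the subgroup $e_0\z\subset\z$.

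For part I.1 I would first observe that since $K/\kz$ is totally ramified of degree $e_0$, one has $v(c)\in e_0\z$ for every $c\in\kz^*$. Hence the numbers $v(c_b)+d(b)$ for different $b\in B$ lie in the coset $d(b)+e_0\z$, so two of them can agree (and in particular can share the minimum $m$) only when the corresponding values $d(b)$ are congruent modulo $e_0$; that is, the $b$'s achieving the minimum all lie in $B_m^0$. A direct computation from \eqref{epi} shows that for $c\in\kz$ with $v(c)=te_0$ and $f\in\gc_s$ one has $p_{s+te_0}(cf)=r(c/\pi^{te_0})\cdot X^{te_0}\cdot p_s(f)$ in $\rr$, and here $X^{te_0}=1$ because $n\mid te_0$, while the prefactor $r(c/\pi^{te_0})=r(c/\piz^t)\cdot r(\piz/\pi^{e_0})^t$ lies in $\ovk^*$. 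Thus each leading term $c_b b$ with $b\in B_m^0$ contributes a nonzero $\ovk$-multiple of $\gp(b)$ to $p_m(\sum c_b b)$, and the $\kz$-graded independence hypothesis prevents any cancellation. The formulas for $d(\sum c_b b)$ and $\gp(\sum c_b b)$ then follow exactly as in Proposition \ref{pbase}(1). Specialising to a single-term sum, part I.2 reduces to determining the smallest exponent of $\piz$ with $\piz^t b\in\gc_i$, which is $\lceil(i-d-d(b))/e_0\rceil$; part I.1 then rules out additional cancellation in $\kz$-linear combinations, yielding the direct-sum description.

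Part II is a constructive lattice argument. The subspace $L_m:=L\cap\gc_m$ is a $\goz$-submodule of $L$, and the identity $\piz^{-1}\gc_{m+e_0}=\gc_m$ together with the $\kz$-vector-space structure on $L$ implies that multiplication by $\piz$ is a $\goz$-linear isomorphism $L_m\stackrel{\sim}{\to}L_{m+e_0}$. Hence $L_0$ is a full $\goz$-lattice in $L$ with $\piz L_0=L_{e_0}$, and the finite filtration $L_0\supset L_1\supset\dots\supset L_{e_0}=\piz L_0$ has successive $\ovk$-quotients $\overline{L}_m:=L_m/L_{m+1}\hookrightarrow \gc_m/\gc_{m+1}\cong\rr$. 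I would then choose, for each $m\in\{0,\dots,e_0-1\}$, an $\ovk$-basis of $\overline{L}_m$ and lift it arbitrarily to $L_m$; the union $B$ of these lifts is the desired set. By Nakayama's lemma the lifts form a $\goz$-basis of $L_0$, hence a $\kz$-basis of $L$; and for each residue class $s\bmod e_0$ the set $B_s^0$ is precisely the chosen basis of the unique $\overline{L}_m$ with $m-d\equiv s\bmod e_0$, so $\gp(B_s^0)$ is $\ovk$-linearly independent in $\rr$ by construction.

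The most delicate point is the lattice bookkeeping in Part II: verifying the isomorphism $\piz:L_m\stackrel{\sim}{\to}L_{m+e_0}$ (which pins down $L_0$ as a full $\goz$-lattice in $L$) and the dimension identity $\dim_{\kz}L=\sum_{m=0}^{e_0-1}\dim_{\ovk}\overline{L}_m$. Once these are in hand the construction in part II and the arguments in part I are essentially formal parallels of Proposition \ref{pbase}.
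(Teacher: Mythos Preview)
Your proposal is correct and follows the same route as the paper. For Part I the paper simply says to generalize Proposition \ref{pbase} ``in the obvious way,'' which is exactly what you have done (the key observations being $v(c_b)\in e_0\z$ and $n\mid e_0$); for Part II the paper constructs $B$ just as you do---lift $\ovk$-bases of $(L\cap\gc_s)/(L\cap\gc_{s+1})$ for $0\le s<e_0$---and verifies that it is a $\kz$-basis by the same length/dimension count that you phrase via Nakayama.
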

\begin{proof}
I. The proof is easy; it suffices to generalize the proof of Proposition \ref{pbase} in the obvious way.

II. We take $B$ to be the union $B^0_s,\ 0\le s<e_0$. 
 Here we set each $B^0_s$ to be a lift to $L$ of any $\ovk$-base of $(L\cap \gc_{s})/  (L\cap \gc_{s+1})$.
 
 Obviously, $B$ is $k_0$-independent. The number of elements in it clearly equals the length of the $\ovk$-module $L\cap \gc_{0}/L\cap \gc_{e_0}=(L\cap \gc_{0})/\piz (L\cap \gc_{0})$. Hence it also equals the $\kz$-dimension of $L$, and we obtain the result in question.
\end{proof}

Now we are able to generalize Theorem \ref{tcomp}(3) and extend Theorem \ref{tmain}(2).

Once again we assume that $n$ is a power $p$. We will use the notation $\vp$ of the $p$-adic valuation of integers; $w=\vp(e_0)$. 

\begin{theo}\label{trel}

I. In each of the following two cases and for any $s\in \z$ the corresponding set $B_s^0\subset \z[G]$ consists of at most one element.  

1.  $\si_i,\ 1\le i \le a$, belong to $G$, 
and for the corresponding ramification jumps we have $v_p(h(\si_{2})-h(\si_1))<v_p(h(\si_{3})-h(\si_2))<\dots< v_p(h(\si_{a})-h(\si_{a-1}))<w$.
We take $B$ to be the set of  $\prod (\si_i-1)^{n_i}$,  
 where $(n_i)$ run through all sets of non-negative integers such that $\sum n_i<p$.

2. 
The  assumptions of Theorem \ref{tmain}(1) are fulfilled and $0<v_p(h_2-h_1)<w-1$. We take $B=\{ (\si_1-1)^i(\si_2-1)^j :\ 0\le i,j\le p-1\}$.

II. Consequently, in case I(2) the set $B$ is a $\kz$-graded base of $(\kk,\kz)$.
\end{theo}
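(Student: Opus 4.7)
My plan is to separately handle Part~I(1), Part~I(2), and then deduce Part~II from Part~I(2). For Part~I(1), the endgame of the proof of Theorem~\ref{tcomp}(3) carries over with the modulus $n$ replaced by $p^w$ (justified because $p^w \mid e_0$). If $f = \prod(\si_i-1)^{n_i}$ and $f' = \prod(\si_i-1)^{n'_i}$ lie in the same $B_s^0$, Theorem~\ref{tcomp}(2) gives $d(f) - d(f') = \sum_i o_i h(\si_i)$ with $o_i = n'_i - n_i$, and this can be rewritten telescopically as $q_1 h(\si_1) + \sum_{l=1}^{a-1} q_{l+1} p^{e_l} s_l$, where $q_j = \sum_{r\ge j} o_r$ satisfies $|q_j| \le p-1$, $s_l$ is prime to $p$, and $e_l = \vp(h(\si_{l+1})-h(\si_l))$. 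Because $1 \le e_1 < e_2 < \cdots < e_{a-1} < w$ (the lower bound $e_1 \ge 1$ coming from Lemma~\ref{lcongj}), reducing mod $p^{e_1}$ peels off $q_1$, then mod $p^{e_2}$ peels off $q_2$, and so on up to mod $p^w$ peeling off $q_a$; at each stage the gap is $\ge 1$ while $|q_j| < p$. One works in the typical case $p \nmid h(\si_1)$; cf.\ Remark~\ref{rmax}.

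For Part~I(2) the elements $f_{ij}$ with $i+j \ge p-1$ lie beyond the scope of Theorem~\ref{tcomp}, so the argument must invoke the explicit formula $d(f_{ij}) = H(i,j)$ from Theorem~\ref{tmain}(1) together with its structural consequence $p \mid H(i,j) \iff i+j \ge p-1$. This dichotomy immediately rules out pairs $(i,j),(i',j')$ straddling the threshold $p-1$: their $H$-values differ modulo $p$ but must agree modulo $e_0$. For same-region pairs, substituting the piecewise-linear formula for $H$ and using the identity $\wh - h_1 = p(h_2 - h_1)$ reduces $H(i,j) \equiv H(i',j') \pmod{p^w}$ to a congruence of the form $\al h_1 + \be(h_2-h_1) \equiv 0 \pmod{p^{w-1}}$ with $|\al|,|\be| < p$. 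Combining $p \nmid h_1$ (Proposition~\ref{pram}(1)) with the hypothesis $\vp(h_2-h_1) < w-1$, a two-step reduction first forces $\al = 0$ and then $\be = 0$, yielding $(i,j) = (i',j')$.

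For Part~II, the previous step gives that each $B_s^0$ has at most one element; together with $\gp(f_{ij}) \ne 0$ from Theorem~\ref{tmain}(1) this means every non-empty $\gp(B_s^0)$ is a singleton nonzero set and hence $\ovk$-linearly independent, so $B$ is $\kz$-graded independent. Moreover $B \subset \z[G] \subset \kzg$ has exactly $p^2 = n = \dim_{\kz}\kzg$ elements, and the change-of-basis matrix from the standard basis $\{\si_1^c \si_2^d\}$ to $\{f_{ij}\}$ is unipotent over $\z$ by binomial expansion, so $B$ is a $\z$-basis of $\z[G]$ and a fortiori a $\kz$-basis of $\kzg$.

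The genuine difficulty is Part~I(2), specifically the regime $i+j \ge p-1$ where Section~\ref{scomp}'s short-composition machinery no longer applies: everything hinges on the divisibility dichotomy from Theorem~\ref{tmain}(1) to separate straddling from same-region comparisons, while the hypothesis $\vp(h_2-h_1) < w-1$ provides exactly the $p$-adic room needed for the residual two-step reduction to succeed.
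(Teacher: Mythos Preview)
Your proof is correct and follows essentially the same route as the paper: the telescoping rewrite and $p$-adic peeling for I(1), the dichotomy $p\mid H(i,j)\iff i+j\ge p-1$ to split I(2), and a direct basis check for II (where you are in fact more explicit than the paper's terse appeal to Proposition~\ref{pbaser}). One small expositional point: your uniform congruence $\al h_1+\be(h_2-h_1)\equiv 0\pmod{p^{w-1}}$ with $|\al|,|\be|<p$ arises cleanly only in the above-threshold region after dividing by $p$; for both-below pairs the congruence is initially mod $p^w$ with an extra factor of $p$ on the $(h_2-h_1)$ term, so you either need a preliminary mod-$p$ step or (as the paper does) simply invoke I(1) for that case.
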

\begin{proof}
I. The proofs of both assertions are easy and similar to that of Theorem \ref{tcomp}(3). The corresponding values of $\prod (\si_i-1)^{n_i}$ were calculated in Theorem  \ref{tcomp}(2) and  \ref{tmain}(1). Now we proceed to our cases using  simple modifications of the corresponding arguments above.

1. According to Theorem  \ref{tcomp}(2) 
(and similarly to part 3 of that theorem), it suffices to verify for two sets of non-negative integers $(n_i)$ and $(n_i')$ whose sums do not exceed $p-1$  that if  $\sum n_ih(\si_i)\equiv \sum n'_ih(\si_i)\mod e_0$ then  $(n_i)=(n_i')$. 

Once again, for any numbers $o_i$ we have  $\sum_{i=1}^a  o_ih(\si_i)=
q_1h(\si_1)+ \sum_{j=1}^{a-1}q_{j+1}p^{j}s_j$, where $q_l=\sum_{r=l}^ao_i$ for any $l,\ 1\le l\le a$.
We take $o_i=n'_i-n_i$; then 
for any $j\ge 0$ clearly 
 $q_j$ is an integer that is zero if it is divisible by $p$. Applying obvious induction we obtain that all $q_j$ vanish; hence  $(n_i)=(n_i')$ and we obtain a contradiction.

2. Once again, if  $p\nmid s$ then the corresponding values of $i+j$ are less than $p-1$; see Theorem   \ref{tmain}(1). Hence the statement in question is given by the previous assertion.

Now assume $p\mid s$.  According Theorem   \ref{tmain}(1),  we should count the pairs $(0\le i<p,\ 0\le j <p)$ such that $i+j\ge p-1$ and $(pi-(p-1)^2)h_1+ph_2j\equiv s\mod e_0$. Now, if we have two pairs $(i,j)$ and  $(i',j')$ satisfying this congruence then $p^{w-1}\mid h_1(i-i')+h_2(j-j')$.
 Since $h_1\equiv h_2\mod p$, we obtain $i-i'=j-j$. Lastly, if $p^{w-1}\mid (i-i')(h_1-h_2)$ then $p\mid i-i'$; hence $i=i'$ and $j=j'$.

II. Immediate from (case 2 of) assertion I combined with Proposition \ref{pbaser}(I). 
\end{proof}

\begin{rema}\label{rgen}  It can make sense to modify Definition \ref{dbaser} to obtain more general results.  Yet the authors did not check the details here.

1. Firstly, one can try to avoid the assumption that $k/k_0$ is totally ramified. Then one should take the corresponding base field extension into account; this does not seem to be hard.

2. 
 Secondly, one can probably extend the results of this section to the case where $\kz$ does not lie in $k$ (but lies in $K$). The main difficulty here is our definition of the function $\gp$; note that $\gp(\pi^sf)=X^s\gp(f)$ for any $s\in \z$ and $f\in \kg\setminus \ns$. It appears that this problem can be avoided if one considers the function  $\gp':\ f\mapsto \gp(\phi(t(\phi\ob(f))))$ instead of $\gp$\footnote{Recall that $t$ is the automorphism of $\kkk$ that swaps the factors of this tensor square} and applies Proposition \ref{pkkp}(6) (and possibly Theorem  \ref{tkkk}(6)).

\end{rema}

\subsection{Some more associated modules (and orders) and their relation to tame lifts }\label{ssmore}

Unfortunately, the associated order $\ga(\gok_K)=\{f\in k[G]:\ f(\gk)\subset \gk \}$ does not have to be equal to any of the $\ga_i$. For this reason, we introduce some more types of associated modules. We modify slightly the notation of \cite{bgm3}. 
 We also extend it to relative modules; however, the reader may ignore this (and assume $k_0=k$ till the end of this paper).

Below $a,b,a',b',i,j$ will always denote arbitrary  integers.

 We set $\gc(i,j)=\{f\in K[G]:\ f(\gm^i)\subset \gm^j \}$; $\ga(i,j)=\gc(i,j)\cap k[G]$ and $\gaz(i,j)=\gc(i,j)\cap \kz[G]$. 

\begin{pr}\label{plij}
1. $\gc(i,j)=\phi(\gm^j\otimes \gm^{i-d-n+1})$.
 
 2. $\gc_{j-i}\subset  \gc(i,j)\subset \gc_{j-i+1-n}$.
 
 \end{pr}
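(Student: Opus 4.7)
The plan is to handle part 2 first, directly from the definitions, and then derive part 1 using the isomorphism $\phi$ together with the trace behavior encoded in Lemma \ref{ltr}.

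For part 2, the inclusion $\gc_{j-i} \subset \gc(i, j)$ is immediate: if $f \in \gc_{j-i}$, then $v(f(z)) - v(z) \geq j - i$ for all nonzero $z$, so $z \in \gm^i$ forces $v(f(z)) \geq j$, i.e. $f(z) \in \gm^j$. For $\gc(i, j) \subset \gc_{j-i+1-n}$, the key observation is that for any $f \in K[G]$ and any $c \in k^\ast$ we have $f(cz) = c f(z)$ (because $c$ is $G$-fixed), so the function $z \mapsto v(f(z)) - v(z)$ is invariant under $z \mapsto \piz z$ and hence depends only on the class of $v(z)$ modulo $n$. Thus to bound this function from below it suffices to restrict to $z$ with $i \le v(z) \le i + n - 1$; any such $z$ lies in $\gm^i$, whence $v(f(z)) \geq j$ and $v(f(z)) - v(z) \geq j - (i + n - 1) = j - i + 1 - n$.

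For part 1, I would use Theorem \ref{tkkk}(2), which gives $\phi(\alpha)(z) = \sum_l x_l \tr(y_l z)$ for $\alpha = \sum_l x_l \otimes y_l$. The needed arithmetic input, obtained by iterating Lemma \ref{ltr} via multiplication by $\piz$ and using $k$-linearity of $\tr$, is that $\tr(\gm^t) = \gmo^{\lceil (t+d)/n \rceil}$ for all $t \in \z$; equivalently, the largest fractional ideal of $K$ whose trace lies in $\gmo^m$ is $\gm^{nm - d - n + 1}$. First I would decompose $\alpha$ in the $k$-basis $\{1, \pi, \dots, \pi^{n-1}\}$ of the first tensor factor, writing $\alpha = \sum_l \pi^l \otimes w_l$ uniquely with $w_l \in K$. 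Using the direct-sum decomposition $\gm^j = \bigoplus_l \pi^l \gmo^{\lceil (j-l)/n \rceil}$ (which comes from sorting elements of $K$ by their valuation modulo $n$), the condition $\phi(\alpha)(z) \in \gm^j$ for all $z \in \gm^i$ separates into $n$ independent conditions $\tr(w_l z) \in \gmo^{\lceil (j-l)/n \rceil}$, one per residue class $l$. Inverting each via the trace formula pins down $w_l$ to an explicit fractional ideal, and these conditions should assemble into the submodule $\gm^j \otimes \gm^{i - d - n + 1}$ of $\kkk$ appearing in the statement.

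The main obstacle I anticipate is reconciling the per-$l$ description coming out of this analysis with the single tensor-product notation $\gm^j \otimes \gm^{i - d - n + 1}$ of the statement: this requires tracking how the relation $\pi^n = \piz u$ (with $u \in \go^\ast$) and the $\otimes_\go$-bilinearity of the paper's tensor convention let one shift between equivalent $(l, s_l)$-presentations, so that the submodule cut out by the $n$ separate trace inequalities is recognized as a single tensor product of fractional ideals. Once this identification is in place, the two inclusions of part 1 follow directly from the trace formula and the bookkeeping described above, with part 2 then serving as a sanity check on the valuations.
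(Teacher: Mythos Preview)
Your Part~2 argument is exactly what the paper's ``obvious'' stands for; just replace $\piz$ by $\pik$ throughout, since you need $v(\pik)=n$ to obtain periodicity of $z\mapsto v(f(z))-v(z)$ modulo $n$ (with $\piz$ you only get periodicity modulo $e_0$, which is too coarse). The same slip recurs where you iterate Lemma~\ref{ltr}.

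For Part~1 your route genuinely differs from the paper's, which gives no self-contained argument: it merely records that the different of $\kk$ equals $\gm^{d+n-1}$ and invokes Theorem~1.2.1 of \cite{bgm3}. Your plan amounts to reproving that cited result directly in the present notation, and it works. Writing $\alpha=\sum_{l=0}^{n-1}\pi^l\otimes w_l$ and using $\gm^j=\bigoplus_{l}\pi^l\gmo^{m_l}$ with $m_l=\lceil(j-l)/n\rceil$, the condition $\phi(\alpha)(\gm^i)\subset\gm^j$ does split into the $n$ independent constraints $\tr(w_l\gm^i)\subset\gmo^{m_l}$, and the trace formula yields $w_l\in\gm^{nm_l-n-d+1-i}$. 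The ``reassembly'' you flag is then simply the matching decomposition $\gm^j\otimes\gm^b=\bigoplus_{l}\pi^l\otimes\gm^{nm_l+b}$, obtained by moving powers of $\pik$ across the $\go$-tensor. One warning: carrying this through gives $b=-i-d-n+1$, not the printed $i-d-n+1$; the latter is inconsistent with Part~2 (via $\gc_s=\phi(X_{s-d})$) for $i\neq 0$, so you should expect a sign slip in the stated exponent rather than a flaw in your method.
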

 \begin{proof} 1. Recall that the following well-known statement: the different of $\kk$ equals $\gm^{d+n-1}$. Combining it with Theorem 1.2.1 of \cite{bgm3} we obtain the result.
 
 2. Obvious.
\end{proof}

\begin{rema}\label{rbaseord}
1. Consequently, $\gaz(i,j)/\gaz(j-i)$ is an $\ovk$-vector space; it obviously equals the kernel of the $\ovk$-linear map $\gaz_{j-i+1-n}/\to \gaz_{j-i} \to  \gc_{j-i+1-n}/\gc(i,j)$.  Now, a graded base $B$ for $(\kk,\kz)$ yields an explicit base of  $\gaz_{j-i+1-n}/ \gaz_{j-i}$  (see Proposition \ref{pbaser}(I.2)), and one can compute the values on this map on this base. The authors do not think that there exists any nice way for doing this in general. However, if $B$ is ``nice enough'' then the function $\gp$ is sufficient for this computation. 

Below we will essentially demonstrate this in  Propositions \ref{pgen} and Propositions \ref{pdiag}(1,3). Yet we will work in $\kkk$ instead of $\kg$; thus one has to apply the map $\phi\ob$ (along with Theorem \ref{tkkk}(4) and Proposition \ref{plij}(1)) to make the corresponding ``translation''.

2. One may call $\gaz(i,i)\subset \ga(i,i)$ the associated orders of the ideal $\gm^i$. The ``most traditional'' of them is the ring $\ga(0,0)$. 
\end{rema}

Now we study the filtration on $\kkk$ corresponding to $\gm^a\otimes \gm^b$ for $a,b\in \z$. The first simple observation here is the following one.

\begin{lem}\label{lord}


Define the following two relations on $\z^2$: $(a,b)\le^{\z^2} (a',b')$ if and only if $a\le a'$ and $b\le b'$ and  $(a,b)\sim^{\z^2} (a',b')$ if and only if   $a- a'= b'-b=nc$ for some $c\in \z$.

Then the following statements are valid.

1. $\le^{\z^2}$ is a partial order relation, and $\sim^{\z^2}$ is an equivalence relation.

2. Set $X=\z^2/\sim^{\z^2}$; we will write $[(a,b)]$ for the $\sim^{\z^2}$-equivalence class of $(a,b)$. Then $\le^{\z^2} $ induces a well-defined  partial order relation $\le^X$ on $X$; here we set $[(a,b)]\le^X [(a',b')]$ if and only if  $(a,b)\le^{\z^2} (a'',b'')$ for some $(a'',b'')\sim^{\z^2}(a',b')$.

3. $\gm^a\otimes \gm^b\supset \gm^{a'}\otimes \gm^{b}$   if and only if $[(a,b)]\le^X [(a',b')]$.

 \end{lem}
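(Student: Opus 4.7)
\textbf{Parts 1 and 2} are routine. $\le^{\z^2}$ inherits its three order properties from $\z$ coordinate-wise, and $\sim^{\z^2}$ is reflexive (take $c=0$), symmetric ($c\mapsto -c$), and transitive (add parameters). For Part 2 I would first check that $\le^X$ is well-defined on equivalence classes: changing the representative of either side by $(nd,-nd)$ merely shifts the witness $(a'',b'')$ correspondingly. Reflexivity is immediate, and transitivity follows by composing shifts. The subtle point is antisymmetry: from $a\le a'+nc$, $a'\le a+nc'$ one gets $c+c'\ge 0$, while $b\le b'-nc$, $b'\le b-nc'$ give $c+c'\le 0$; hence $c'=-c$ and all four inequalities collapse to equalities, yielding $(a,b)\sim^{\z^2}(a',b')$.

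\textbf{Part 3} is the heart of the matter. My plan is to recognize $\gm^a\otimes\gm^b$ as the cyclic $R_0$-submodule of $\kkk$ generated by $\pi^a\otimes\pi^b$, where $R_0:=\gok\otimes_\go\gok$. Since $\kk$ is totally ramified of degree $n$, we can write $\pik=v\pi^n$ with $v\in\gok^*$; a short calculation using $\pik\otimes 1=1\otimes\pik$ then gives $\pi^n\otimes\pi^{-n}=v^{-1}\otimes v$, which is a unit in $R_0$. Consequently $\gm^a\otimes\gm^b=\gm^{a-n}\otimes\gm^{b+n}$, and iterating shows the module depends only on $[(a,b)]$. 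Combined with the obvious monotonicity $\gm^{a''}\otimes\gm^{b''}\subseteq\gm^a\otimes\gm^b$ for $a\le a''$, $b\le b''$ (using flatness of $\gok$ over $\go$), this yields the ``if'' direction.

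For the ``only if'' direction I would multiply by the unit $\pi^{-a}\otimes\pi^{-b}\in\kkk$ to reduce the containment to the single membership $\pi^i\otimes\pi^j\in R_0$, with $i=a'-a$, $j=b'-b$. The key lemma --- and the main obstacle --- is then: $\pi^i\otimes\pi^j\in R_0$ if and only if there is a $c\in\z$ with $i+nc\ge 0$ and $j-nc\ge 0$. The easy direction uses the unit $\pi^{-nc}\otimes\pi^{nc}\in R_0^*$ as above. For the converse, I would write $i=nq_i+r_i$, $j=nq_j+r_j$ with $0\le r_i,r_j<n$, and factor $\pi^i\otimes\pi^j=\pik^{q_i+q_j}\cdot u$ with $u=v^{-q_i}\pi^{r_i}\otimes v^{-q_j}\pi^{r_j}\in R_0$. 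The crux is to show $u\notin\pik R_0$: using the $\go$-basis $\{\pi^l\otimes\pi^m:0\le l,m\le n-1\}$ of $R_0$, the reduction $R_0/\pik R_0\cong\gok/\gm^n\otimes_{\ovk}\gok/\gm^n$ carries $u$ to a unit times $\pi^{r_i}\otimes\pi^{r_j}$, which is nonzero since $r_i,r_j<n$. Hence the precise $\pik$-order of $\pi^i\otimes\pi^j\in\kkk$ (relative to $R_0$) is exactly $q_i+q_j$, forcing $q_i+q_j\ge 0$; taking $c=-q_i$ then provides the desired witness, which translates back to $[(a,b)]\le^X[(a',b')]$.
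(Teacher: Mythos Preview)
Your argument is correct and considerably more detailed than the paper's own proof, which reads in its entirety: ``Assertions 1 and 2 are obvious. Assertion 3 is very easy as well; cf.\ the proof of Proposition~\ref{pkkp}(4).'' Your key observation for Part~3---that $\pi^n\otimes\pi^{-n}=v^{-1}\otimes v\in R_0^*$---is precisely what that reference to Proposition~\ref{pkkp}(4) is meant to evoke (there the statement is phrased as $\eps\otimes\eps^{-1}\in 1+X_1$ for $\eps\in\gok^*$), so the two approaches coincide in spirit.

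Where you go further than the paper is the ``only if'' direction: the paper leaves it implicit, while you reduce to the membership $\pi^i\otimes\pi^j\in R_0$ and then pin down the exact $\pik$-order via the reduction $R_0/\pik R_0\cong(\gok/\gm^n)\otimes_{\ovk}(\gok/\gm^n)$. This is a clean and self-contained way to close the argument; the paper presumably regards the conclusion as immediate from the explicit $\go$-basis of $R_0$ (compare Proposition~\ref{pkkp}(3)), but your version makes the logic transparent. Your antisymmetry check for $\le^X$ in Part~2 is also correct and again more explicit than what the paper provides.
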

 \begin{proof} Assertions 1 and 2 are obvious. Assertion 3 is very easy as well; cf. the proof of Proposition \ref{pkkp}(4).
\end{proof}

We will write $p^X$ for the projection $\z^2\to X$.

Now we associate certain subsets of $X$ to elements and ``ideals'' of $\kkk$. 

\begin{pr}\label{pgen}

1. Then $\al$ can be presented as $\sum_{(i,j)\in G'(\al)} \eps_{ij}\cdot \pi^i\otimes\pi^j,$ where  $\eps_{ij}\in(\gok\otimes \gok)^*$ and  $G'(\al)$ is a subset of $\z^2$ such that 
the images of any two its distinct elements in $X$ are (distinct and) incomparable.

2. The set $G(\al)=p^X(G'(\al))$ is canonically determined by $\al$.

3. $\al\in \gm^a\otimes \gm^b$ if and only if $(a,b)\le^X (a',b') $ for each  $(a',b')\in G(\al) $.

\end{pr}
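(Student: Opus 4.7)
My plan is to prove Part 1 constructively via an absorption algorithm, and then to derive Parts 2 and 3 from an explicit coefficient-level criterion for membership in $\gm^a\otimes \gm^b$. For Part 1 I would use that $\kkk$ is free of rank $n$ over $K$ under the left action, with basis $\{1\otimes \pi^j\}_{0\le j<n}$, so that $\al=\sum_{j=0}^{n-1} y_j\otimes \pi^j$ with unique $y_j\in K$. Writing each nonzero $y_j=u_j\pi^{v(y_j)}$ with $u_j\in \gok^*$ yields the preliminary presentation $\al=\sum_{j:y_j\ne 0}(u_j\otimes 1)(\pi^{v(y_j)}\otimes \pi^j)$; the indices $(v(y_j),j)$ have pairwise distinct $\sim^{\z^2}$-classes because the second coordinates lie in distinct residues modulo $n$. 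If two classes are $\le^X$-comparable, say $[(v(y_{j_1}),j_1)]\le^X [(v(y_{j_2}),j_2)]$, then for suitable representatives the second monomial divides the first in the local ring $\gok\otimes \gok$ with quotient in the maximal ideal; the two terms merge into a single one whose coefficient is a unit plus a maximal-ideal element, still a unit. Each merge strictly reduces the number of terms, so the process terminates at an antichain.

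For Parts 2 and 3 the crucial step is the following coefficient-level lemma: writing $\al$ uniquely in the canonical $k$-basis as $\sum_{0\le r,s<n} c_{rs}\pi^r\otimes \pi^s$ with $c_{rs}\in k$, and setting $c_{rs}=u_{rs}\pik^{m_{rs}}$ with $u_{rs}\in \go^*$ for each nonzero $c_{rs}$, one has $\al\in \gm^a\otimes \gm^b$ if and only if $m_{rs}\ge -\lfloor(r-a)/n\rfloor-\lfloor(s-b)/n\rfloor$ for every $(r,s)$ with $c_{rs}\ne 0$. I plan to prove this by expanding each of the $n^2$ generators $\pi^{a+i}\otimes \pi^{b+j}$ (for $0\le i,j<n$) of $\gm^a\otimes \gm^b$ in the canonical basis via $\pi^n=\pik\cdot u$ with $u\in \gok^*$: each generator's primary contribution lands at the unique basis position $((a+i)\bmod n,(b+j)\bmod n)$ with $\pik$-valuation exactly the claimed bound, while higher-order contributions to other positions have strictly larger $\pik$-valuation, so the criterion can be verified term by term. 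Since $c_{rs}\pi^r\otimes \pi^s$ equals a unit of $\gok\otimes \gok$ times $\pi^r\otimes \pi^{s+nm_{rs}}$, generating the ideal $\gm^r\otimes \gm^{s+nm_{rs}}$ of class $[(r,s+nm_{rs})]$ in $X$, the inequality in the lemma translates (upon unpacking the definition of $\le^X$) directly into $[(a,b)]\le^X [(r,s+nm_{rs})]$. Defining $T(\al):=\{[(r,s+nm_{rs})]:c_{rs}\ne 0\}$, Part 3 follows: $\al\in \gm^a\otimes \gm^b$ iff $[(a,b)]\le^X$ every element of $T(\al)$, equivalently iff $[(a,b)]\le^X$ every $\le^X$-minimal element of $T(\al)$. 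For Part 2, I would identify $p^X(G'(\al))$ with the set of $\le^X$-minimal elements of $T(\al)$ (which is intrinsic to $\al$) for any Part 1 presentation: expanding each unit $\eps_{ij}$ in the $\gok\otimes \gok$-basis, its constant term produces the leading contribution to basis position $(i\bmod n,j\bmod n)$ at class exactly $[(i,j)]$, while higher-order terms of $\eps_{ij}$ land at strictly higher classes in $X$; the antichain property forces distinct $(i,j)\in G'(\al)$ to occupy distinct basis positions (otherwise their classes in $X$ would be $\le^X$-comparable), so no cancellation erases these leading classes, and every other class appearing in $T(\al)$ strictly dominates some class in $p^X(G'(\al))$.

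The main obstacle I expect is the coefficient-level lemma itself, since tracking the expansion $\pi^{a+i}\otimes \pi^{b+j}=\pik^{q_1+q_2}(u^{q_1}\otimes u^{q_2})(\pi^{(a+i)\bmod n}\otimes \pi^{(b+j)\bmod n})$ and the further $\gok\otimes \gok$-basis expansion of the unit $u^{q_1}\otimes u^{q_2}$ requires some bookkeeping. The saving grace is that only the leading (constant) contribution of $u^{q_1}\otimes u^{q_2}$ determines the minimal $\pik$-valuation at the primary basis position (since the residue of $u$ in $\ovk$ is a unit), and the higher-order contributions land at strictly higher classes in $X$, allowing a clean inductive treatment.
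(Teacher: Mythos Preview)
Your plan is sound and, unlike the paper's proof (which simply cites \cite[\S2.4]{bgm3} and Proposition~2.5.2 of ibid.), is self-contained. Two remarks will make your life easier.

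\medskip

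\textbf{The coefficient lemma is much easier than you fear.} You do not need to expand $u^{q_1}\otimes u^{q_2}$ at all. Work instead with the $\go$-decomposition
\[
\gm^a=\bigoplus_{r=0}^{n-1}\pi^r\cdot \pik^{\,-\lfloor (r-a)/n\rfloor}\,\go,
\]
which follows at once from $v(\sum_r x_r\pi^r)=\min_r(n\,v_k(x_r)+r)$ for $x_r\in k$. Tensoring over $\go$ gives
\[
\gm^a\otimes\gm^b=\bigoplus_{0\le r,s<n}\pik^{\,-\lfloor(r-a)/n\rfloor-\lfloor(s-b)/n\rfloor}\,\go\cdot(\pi^r\otimes\pi^s),
\]
so $\al=\sum c_{rs}\pi^r\otimes\pi^s\in\gm^a\otimes\gm^b$ if and only if $v_k(c_{rs})\ge -\lfloor(r-a)/n\rfloor-\lfloor(s-b)/n\rfloor$ for every $(r,s)$. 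No unit-tracking is required.

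\medskip

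\textbf{A shortcut for Part 2.} Once Part~3 is known for \emph{some} presentation $G'(\al)$, Part~2 is immediate without going through $T(\al)$: by Part~3 the set $\{[(a,b)]\in X:\al\in\gm^a\otimes\gm^b\}$ equals the set of common $\le^X$-lower bounds of $p^X(G'(\al))$; since $p^X(G'(\al))$ is an antichain, it is recovered as the set of $\le^X$-maximal elements of this lower set, which is intrinsic to $\al$. Your route via $\min T(\al)$ also works (and gives a concrete description of $G(\al)$), but the argument that contributions from other $(i,j)\in G'(\al)$ at the special position $(i_0\bmod n,j_0\bmod n)$ have strictly larger $\pik$-valuation needs the following observation: classes in $X$ with representatives sharing the same residues modulo $n$ are totally ordered, so a class $\ge^X[(i,j)]$ at that position must be $>^X[(i_0,j_0)]$ by the antichain hypothesis.

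\medskip

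In short, your absorption argument for Part~1 is correct (the ring $\gok\otimes_{\go}\gok$ is local with residue field $\ovk$, so unit${}+{}$maximal-ideal element is a unit), and with the simplified coefficient lemma the rest is routine. The paper's own treatment is a bare citation, so your approach is a genuine, more explicit alternative.
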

\begin{proof}
All these statements are rather simple and easily follow from the results of \cite[S2.4]{bgm3}; see also Proposition 2.5.2 of ibid. 
\end{proof}

Now we define sets that are ``graded independent in a strong sense''. 

\begin{defi}\label{diag}
1. We say that $\al$ is {\it diagonal} if the number $a+b$ is constant  on the set   $G(\al)=\{([a,b])\}$. 

Moreover, if this is the case then we will also say that $\phi(\al)$ is diagonal.

2. We say that a set $B\subset \kog\setminus \ns$ is {\it $\kz$-diagonal} whenever it is $\kz$-graded independent (see Definition \ref{dbaser}) and for any $s\in \z$ any non-zero $\kz$-linear combination of elements of $B_s^0$  is diagonal.


3. For a totally ramified extension $\kp$ and $\al\in \kkp$ we will write  $d^{\kkp}(\al)=i$ whenever $\al \in X_i \setminus X_{i+1}$.

\end{defi}

\begin{pr}\label{pdiag}

Assume $\al$ is diagonal and  $d^{\kkk}(\al)=i\in \z$. 

1. 
 Assume $\rx_{i}(\al)=\sum_{l=0}^{n-1}a_lX^l$ (see Theorem \ref{tkkk}(5)). Then $G(\al)=\{\ ([(l,i-l)])  |\ a_l\neq 0\}$. 
 
 2. Assume $\be\in X_{i+n-1}$. Then $G(\al)=G(\al+\be)$.

3. If $B$ is $\kz$-diagonal and $c:B\to \kz$ is a non-zero function.

Set $G'$ to be  the union of $G(\sum_{b\in B^0_s} c(b)b)$ for $s$ running through all those integers such that not all $c(b)$ are zero for $b\in B_s^0$. 

 Then $G(\sum_{b\in B} c(b)b)$ equals the set of $\le^X$-minima of $G'$.

\end{pr}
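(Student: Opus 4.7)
For part (1), my plan is to combine the direct-sum decomposition $X_i = \bigoplus_{l=0}^{n-1} \pi^l \otimes \gm^{i-l}$ from Proposition \ref{pkkp}(3) with the explicit formula for $\rx_i$. Writing $\al = \sum_l \pi^l \otimes \gamma_l$ with $\gamma_l \in \gm^{i-l}$, the coefficient $a_l$ of $\rx_i(\al)$ equals $r(\gamma_l/\pi^{i-l})$, so $a_l \ne 0$ exactly when $\pi^l \otimes \gamma_l$ contributes a unit-coefficient term at position $(l, i-l)$. The diagonality hypothesis precludes contributions off the line $\{a+b = i\}$, and the classes $[(l, i-l)]$ for $l \in \{0, \ldots, n-1\}$ are pairwise incomparable in $\le^X$; hence they exhaust $G(\al)$.

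For part (2), observe that $\be \in X_{i+n-1} \subseteq X_{i+1}$, so $\al$ and $\al+\be$ share the same image in $X_i/X_{i+1}$ and hence the same $\rx_i$, yielding the same diagonal positions $\{[(l,i-l)] : a_l \ne 0\}$ in their canonical presentations. It remains to rule out new minimal positions contributed by $\be$: applying Proposition \ref{pkkp}(3) to $X_{i+n-1}$, every $\be$-position has the form $[(l,k)]$ with $l \in \{0,\ldots,n-1\}$ and $k \ge i+n-1-l$. Fixing any $l_a$ with $a_{l_a} \ne 0$, I would verify via a short case analysis (taking $c=0$ when $l_a \le l$ and $c=1$ when $l_a > l$) that $[(l_a, i-l_a)] \le^X [(l,k)]$, with the hypothesis $k \ge i+n-1-l$ supplying the required inequalities in each case. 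Since $\al$- and $\be$-contributions live on distinct diagonals and cannot cancel, dominance of every new position finishes the proof.

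For part (3), set $\tilde S = \phi^{-1}(\sum_{b \in B} c(b) b)$ and decompose $\tilde S = \sum_s \tilde S_s$ with $\tilde S_s = \phi^{-1}(\sum_{b \in B_s^0} c(b) b)$. By the $\kz$-diagonal hypothesis each nonzero $\tilde S_s$ is diagonal, and by Proposition \ref{pbaser}(I.1) we have $d^{\kkk}(\tilde S_s) = m_s := \min_{b \in B_s^0}(v(c(b))+d(b))$. The key observation is that $c(b) \in k_0$ forces $v(c(b)) \in e_0\z$, so $m_s \equiv s \pmod{e_0}$; hence distinct $s$'s produce distinct $m_s$'s and the diagonals $\{a+b=m_s\}$ are pairwise disjoint in $X$. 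Consequently the canonical presentations (Proposition \ref{pgen}) of the $\tilde S_s$'s concatenate without cancellation into a presentation of $\tilde S$ with positions $\bigsqcup_s G'(\tilde S_s)$, and $G(\tilde S)$ consists of the $\le^X$-minima of $\bigcup_s G(\tilde S_s) = G'$.

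The main obstacle is the sharpness of the bound in part (2): a weaker hypothesis like $\be \in X_{i+1}$ would allow $\be$-positions such as $[(n-1, i-n+2)]$ that are not dominated by, e.g., $[(0,i)] \in G(\al)$, as one checks directly by ruling out every integer $c$. The offset $n-1$ is therefore essential: it forces the relevant interval of admissible integers $c$ to be nonempty for a suitable choice of $l_a$ in the dominance argument.
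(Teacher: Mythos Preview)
Your argument is correct and follows essentially the same route as the paper's proof: part~1 is unpacked from the definitions exactly as the paper intends, part~3 is the same concatenate-then-take-minima argument, and your dominance case analysis in part~2 is just an explicit unwinding of the paper's single observation that $a+b\ge n-1$ forces $\pi^a\otimes\pi^b\in\gok\otimes\gok$ (so all of $\be$ can be absorbed into the unit coefficient at one chosen position $(l_a,i-l_a)$). The only slip is a sign: with the paper's convention $(a'',b'')=(a'-nc,\,b'+nc)$ one needs $c=0$ when $l_a\le l$ and $c=-1$ when $l_a>l$, not $c=1$.
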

\begin{proof}
1. Immediate from our definitions.

2. The statement easily follows from the following simple observation: if $a+b\ge n-1$ then $\pi^a\ot \pi^b\in  \gok\ot \gok$.

3. Choose a representative in $\z^2$ for each $v\in G'$; denote the set of these representatives by $\tilde G'$. Then we can present $\sum_{b\in B} c(b)b=\sum_{(i,j)\in \tilde G'} \eps_{ij}\cdot\pi^i\otimes\pi^j,$ where  $\eps_{ij}\in(\gok\otimes \gok)^*$;   
Lemma \ref{lord}(3).  Moreover, if $\tilde G\subset \tilde G'$ is the subset corresponding to all  $\le^X$-minima then this lemma allows to convert this expresion into $\sum_{(i,j)\in \tilde G} \eps'_{ij}\cdot\pi^i\otimes\pi^j,$  where  $\eps'_{ij}\equiv \eps \mod (\gm\otimes \gok+\gok\otimes \gm)$. According to Proposition \ref{pdiag}(2), this yields $G(\sum_{b\in B} c(b)b)=p_X(\tilde G)$, and this concludes the proof.
\end{proof}

Now relate these notions to tame lifts.

\begin{theo}\label{tlift}
Assume 
that $K'/\koz$ is a 
totally ramified extension of complete discrete valuation fields 
 whose degree is a power of $p$, 
 $\koz\subset  k'\subset K$, $\kp$  is a Galois extension of degree $n$ with Galois group $G$,  
  $\kz/\koz$ is a 
   tamely ramified extension of degree $e$, and $B\subset K'[G]$ is a $k'_0$-graded independent set. 
We take $k=\kz k'$, $K=\kz K'$. 

1. Then $K'/\koz$ is linearly disjoint with $\kz/\koz$.

2. $B$ is also a $k_0$-independent set in $\kg$. Consequently, it is a $k_0$-graded base for $(\kk,\kz)$ if it is a $k'_0$-graded base of $(\kp,k'_0)$.

3. Assume in addition that $e\ge n-1$. Then $B$ is also $\kz$-diagonal in $\kg$.

\end{theo}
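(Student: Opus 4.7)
The plan is to combine the coprime-degree structure (for Parts 1 and 2, via Proposition \ref{pkkk}) with the extra room granted by $e\ge n-1$ (for Part 3, via Proposition \ref{pdiag}(2)). Part 1 is immediate: $[K':\koz]$ is a power of $p$ while $[\kz:\koz]=e$ is coprime to $p$ (as $\kz/\koz$ is tamely ramified), so $K'/\koz$ and $\kz/\koz$ are linearly disjoint over $\koz$. In particular $[K:\kz]=[K':\koz]$, so the two grading moduli coincide; call the common value $N$.

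For Part 2, fix $f\in K'[G]$, let $\al'=(\phi^{\kp})\ob(f)\in\kkp$, and write $\al$ for its image in $\kkk$. Proposition \ref{pkkk}(2) gives $\al'\in X_j^{\kp}\iff \al\in X_{je}^{\kk}$, and the explicit formula from Proposition \ref{pkkk}(1) upgrades this to $d^{\kp}(\al')=j\iff d^{\kk}(\al)=je$; hence $d^{\kk}(f)=e\cdot d^{\kp}(f)$ and $\gp^{\kk}(f)=c^{d^{\kp}(f)}\cdot\gp^{\kp}(f)|_{X\mapsto X^e}$ with $c=r(\pi'/\pi^e)\in\ovk^*$. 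Since $\gcd(e,N)=1$, the partition blocks satisfy $B_s^{0,\kk}=B_{e\ob s}^{0,\kp}$ as subsets of $B$. Moreover $X\mapsto X^e$ is an injective $\ovk$-algebra endomorphism of $R$: writing $Y=X-1$ and using $R\cong\ovk[Y]/Y^n$, one has $(1+Y)^e-1=Yu$ for a unit $u\in R$ (because $e$ is coprime to $p$), and the endomorphism $Y\mapsto Yu$ of this local ring has trivial kernel. Combined with the unit rescaling by $c^{d^{\kp}(f)}$, linear independence transfers block by block from $\gp^{\kp}(B_{s'}^{0,\kp})$ to $\gp^{\kk}(B_s^{0,\kk})$, yielding $\kz$-graded independence. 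The graded-base assertion then follows from $B\subset\koz[G]\subset\kzg$, $|B|=N=\dim_{\kz}\kzg$, and Proposition \ref{pbaser}.

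For Part 3, write a nonzero $f=\sum_{b\in B_s^0}\la_b b$ with $\la_b\in\kz$ and set $\al=\phi\ob(f)=\sum_b(\la_b\ot 1)\al_b$, where $\al_b=(\phi^{\kp})\ob(b)\in X_{m_b}^{\kp}\setminus X_{m_b+1}^{\kp}$ with $m_b=d^{\kp}(b)$. Put $w_b=v_{\kz}(\la_b)$ and $\ell=d^{\kk}(f)=\min_b(Nw_b+em_b)$ (by Proposition \ref{pbaser}(I.1) applied in the $\kz$-graded base from Part 2). All levels $Nw_b+em_b$ are congruent to $s$ modulo $N$, so any $b$ not attaining the minimum contributes $(\la_b\ot 1)\al_b\in X_{\ell+N}^{\kk}\subset X_{\ell+n-1}^{\kk}$ (using $N\ge n$). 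For each leading $b$, decompose $\al_b=\al_b^0+\gamma_b$ with leading part $\al_b^0=\sum_l a_l^{(b)}\pi'^l\ot\pi'^{m_b-l}$ ($a_l^{(b)}\in\gopo$) and $\gamma_b\in X_{m_b+1}^{\kp}$; Proposition \ref{pkkk}(2) places $(\la_b\ot 1)\gamma_b$ in $X_{\ell+e}^{\kk}\subset X_{\ell+n-1}^{\kk}$ thanks to $e\ge n-1$. Hence $\al\equiv\al_{\mathrm{diag}}\pmod{X_{\ell+n-1}^{\kk}}$ with $\al_{\mathrm{diag}}=\sum_{b:\,Nw_b+em_b=\ell}(\la_b\ot 1)\al_b^0$.

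It remains to verify that $\al_{\mathrm{diag}}$ is diagonal with sum $\ell$, for then Proposition \ref{pdiag}(2) yields $G(\al)=G(\al_{\mathrm{diag}})$ and $\al$ is diagonal too. Substituting $\pi'=u_0\pi^e$ with $u_0\in\gok^*$, each summand $\la_b a_l^{(b)}\pi'^l\ot\pi'^{m_b-l}$ equals a unit of $\gok\ot\gok$ times $\pi^{Nw_b+el}\ot\pi^{e(m_b-l)}$, whose exponents sum to exactly $\ell$ whenever $a_l^{(b)}\in\gopo^*$; if $a_l^{(b)}$ is not a unit it adds further $v_K$-valuation at least $ne$ to the term, placing it in $X_{\ell+ne}^{\kk}\subset X_{\ell+n-1}^{\kk}$, so it may be discarded. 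A direct check on the relation $\sim^{\z^2}$ shows that distinct classes $[(a,b)],[(a',b')]\in X$ with $a+b=a'+b'=\ell$ are always $\le^X$-incomparable, so Proposition \ref{pgen} forces $G(\al_{\mathrm{diag}})\subset\{[(a,b)]:a+b=\ell\}$; and $\al_{\mathrm{diag}}\notin X_{\ell+1}^{\kk}$ because $\rx_\ell(\al)=\gp^{\kk}(f)\ne 0$ by Proposition \ref{pbaser}(I.1). The main obstacle I expect is the bookkeeping for duplicate $X$-classes arising from distinct $(b,l)$ with the same $el\bmod n$, but since all relevant terms live on the single diagonal $a+b=\ell$ any cancellation among their unit coefficients cannot push $G(\al_{\mathrm{diag}})$ off that diagonal.
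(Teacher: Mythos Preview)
Your arguments for Parts 1 and 2 are correct and follow the paper's route via Proposition \ref{pkkk}(1,2), with more detail than the paper gives. One minor slip: a $k'_0$-graded base of $(\kp,k'_0)$ has cardinality $|G|=n$, not $N=[K':k'_0]$; since $\dim_{\kz}\kz[G]=n$ as well, the conclusion is unaffected.

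Part 3 has a genuine gap at exactly the point you flagged as the ``main obstacle.'' Your claim that cancellation among unit coefficients in $\gok\otimes\gok$ cannot push $G(\al_{\mathrm{diag}})$ off the diagonal is not justified, and in fact it is false in general: for instance $(1+\pi)\otimes 1$ and $-1\otimes 1$ are both units, yet their sum is $\pi\otimes 1$, and $(\pi\otimes 1)(\pi^a\otimes\pi^{\ell-a})=\pi^{a+1}\otimes\pi^{\ell-a}$ sits on the diagonal $\ell+1$. In your setup, distinct leading $b$'s with different $m_b$ contribute terms at the same exponent class whose unit coefficients involve factors like $u_0^{m_b-l}\otimes 1$ and $1\otimes u_0^{m_b-l}$; their sums need not be units, and a non-unit in $\gok\otimes\gok$ lies only in $X_1$, not in $X_{n-1}$, so Proposition \ref{pdiag}(2) does not directly absorb the defect.

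The paper closes this gap by a preliminary normalization you omitted: since all $b\in B_s^0$ have $d^{\kp}(b)\equiv s\pmod N$, one may replace each $b$ by $\piz^{-q_b}b$ (and correspondingly each coefficient by $\la_b\piz^{q_b}$) so that all $d^{\kp}(b)$ are \emph{equal} to $s$. After this, every leading $b$ has the same $m_b$ and the same $w_b$, so for each fixed $l$ the coefficient of $\pip^l\otimes\pip^{s-l}$ is a genuine scalar $\sum_b o_b a_l^{(b)}\in\go_k$ (with $o_b\in\goz$, $a_l^{(b)}\in\go_{k'}$). If this scalar is a non-unit it lies in $\gm_k$, hence the whole term lands in $X_{\ell+n}\subset X_{\ell+n-1}$ and may be discarded; the surviving terms then form a valid decomposition as in Proposition \ref{pgen}(1) with exponent classes on the single diagonal $a+b=\ell$. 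With this normalization in place, the rest of your argument (and your invocation of Proposition \ref{pdiag}(2)) goes through.
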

\begin{proof}
1. Obvious.

2. Immediately follows from Proposition \ref{pkkk}(1,2).

3. Clearly, it suffices to consider the case $B=B_s^0(K'/\koz)$ for some $s\in \z$. Moreover, we can clearly assume that $d^{\kkp}(b)=s$ for all $b\in B$.
Fix a non-zero  
linear combination $\al=\sum_{b\in B}c_bb$, where $c_b\in \kz$.  

Set $m$ to be the minimum of $v(c_b)$. 
Then Proposition \ref{pdiag}(2) allows us to replace $c_b$ by any $c'_b$ such that $c_b-c'_b\in \gm^{em+n}\cap \kz$. 
Consequently, we can take $c'_b= \pik^{em/n} o_b$ for some $o_b\in\goz$. Now, if $b\in B$ and  $b=b'+\sum_{0\le l \le n-1}c_{bl}\pip^l\otimes\pip^{s-l}$ for some $b'\in X_{s+1}^{\kkp}$ then $d^{\kkk}(b')\ge se+e\ge se+n-1$; see Proposition \ref{pkkk}(2). Thus it remains to verify that the element
$\al=\piz ^{em/n}\sum_{b\in B,\ 0\le l\le n-1} c_{bl}\pip^l\otimes\pip^{s-l}$ is diagonal (in $\kkk$). Now, applying Proposition \ref{pkkk}(2) one again we obtain that $G(\al)$ (is non-empty and) consists of all those $[(me+le,-le)]$ such that $v(\sum_{b\in B}c_{bl})>0$, and this concludes the proof.

\end{proof}

\begin{rema}\label{rdiabase}
1. Combining Theorem \ref{tlift} with Theorems \ref{tcomp}(3), \ref{tmain}(3) and \ref{trel} we obtain certain $\kz$-diagonal set of elements in extensions that can be obtained as tame lifts of large enough degrees. In particular, this gives an explicit algorithm for computing all associated orders whenever $G\cong (\zpz)^2 $, the ramification jumps in $K'/k'$ are distinct, and the degree 
 of $\kz/\koz$ is at least $p^2-1$; see Remark \ref{rbaseord}.
 
 2. However, the authors suspect that the elements $(\si_1-1)^i(\si_2-1)^j,\; 0\le i,j \le p-1$ give a diagonal base ``much more often''. Yet we doubt that these elements are ``optimal`` in all cases; they possibly fail to be diagonally independent if the ramification jumps are ``too large'' (and $\cha k=0$). Moreover, there probably exist totally ramified Galois extensions such that no diagonal bases exist for them (in contrast to 
  Proposition \ref{pbaser}(II)). 
 
 3. Checking that any non-zero $\kz$-linear combination of elements of $B_s^0$  is diagonal is rather simple if  $B_s^0$ consists of at most one element. However, it appears that $k$-diagonal bases satisfying this assumptions for all $s$ are rather rare. The only family of examples of this sort known to the authors is the one of {\it stable} extensions introduced in \cite[4.1]{bgm3}. This is a rather big subclass of that of semistable extensions (cf.  Remark \ref{rmain}(2)), and one can easily construct examples of extensions of this sort.
 
 Respectively, our Theorem \ref{tlift} essentially generalizes the implication (1)$\implies$(5) in Theorem 4.4 of ibid. 
\end{rema}




\begin{thebibliography}{1}
\bibitem[Bon00]{bgm2} 
 Bondarko M.V., Local Leopoldt's problem for rings of integers in
abelian p-extensions of complete discrete valuation fields//
Doc. Math., vol. 5,  657--693, 2000.


\bibitem[Bon02]{bgm3} 
Bondarko M.V., Local Leopoldt's problem for ideals in
p-extensions of complete discrete valuation fields// Algebraic
Number Theory and Algebraic Geometry: Papers Dedicated to A. N.
Parshin on the Occasion of his Sixtieth Birthday,  27--57,
Contemporary Mathematics, Providence, 2002.

\bibitem[Bon03]{bgm4}  
Bondarko M.V., Links between associated additive Galois modules
and computation of $H^1$ for local formal group modules// 
J. of Number Theory, vol. 101,  74--104, 2003.

\bibitem[Byo97a]{by} Byott N., {Galois structure of ideals in wildly
ramified abelian  $p$-extensions of a $p$-adic field, and some applications}//
Journal de Theorie des nombres de Bordeaux, v. 9(1),
201--219, 1997.

\bibitem[Byo97b]{bylt} Byott N.,  {Associated orders of certain extensions arising from Lubin-Tate formal groups}//
Journal de Theorie des nombres de Bordeaux. 1997. v. 9, 
449--462, 1997.


\bibitem[FeV02]{fev} Fesenko I.B.,  Vostokov S.V.,   Local Fields and their extensions, second edition,  Translations of Mathematical Monographs  vol. 121,  American Mathematical Soc., 2002.


\bibitem[Leo59]{L} Leopoldt H.-W., \"Uber die Hauptordnung der ganzen Elemente eines abelschen Zahlk\"orpers. (German), J. Reine Angew. Math. 201 (1959), 119--149.

\bibitem[Ser79]{serre}  Serre J.-P., Local fields. Graduate texts in Mathematics,  vol. 67, Springer Science \& Business Media, 1979. 


\end{thebibliography}
\end{document}